\newtheorem{thm}{Theorem}[section]
\newtheorem{lem}[thm]{Lemma}
 \newtheorem{rem}[thm]{Remark}
\numberwithin{equation}{section}
 \DeclareMathOperator{\sig}{sig}
\begin{document}

\title{Computing Floquet Hamiltonians with Symmetries}

\author{Terry Loring}
\email{loring@math.unm.edu}
 \affiliation{Department of Mathematics and Statistics\\
University of New Mexico\\
Albuquerque, New Mexico, 87123, USA}

\author{Fredy Vides}
 \email{fredy.vides@unah.edu.hn}
 
\affiliation{ 
Scientific Computing Innovation Center\\
School of Mathematics and Computer Science\\
Universidad Nacional Aut\'onoma de Honduras\\
Tegucigalpa, Honduras
}

\date{\today}

\begin{abstract}
Unitary matrices arise in many ways in physics, in particular as a
time evolution operator. For a periodically driven system one frequently
wishes to compute a Floquet Hamilonian that should  be a Hermitian operator
$H$ such that $e^{-iTH}=U(T)$ where $U(T)$ is the time evolution
operator at time corresponding the period of the system. That is,
we want $H$   to be equal to  $-i$ times a matrix logarithm of $U(T)$. If
the system has a symmetry, such as time reversal symmetry, one can
expect $H$ to have a symmetry beyond being Hermitian.

 We discuss
here practical numerical algorithms on computing matrix logarithms
that have certain symmetries which can be used to compute Floquet
Hamiltonians that have appropriate symmetries.  Along the way,
we prove some results on how a symmetry in the Floquet operator
$U(T)$ can lead to a symmetry in a basis of Floquet eigenstates.

\end{abstract}

\maketitle


\section{Introduction}

Unitary matrices arise in critical ways in many parts of physics,  for
example those representing time evolution. These are not always constructed
as an exponential $U=e^{iH}$, but can be result of an integration
of a differential equation or a product of exponentials
$U=e^{iH_{1}}e^{iH_{2}}\cdots e^{iH_{n}}$.

Given a physically meaningful unitary $U$, one generally wants either
to understand the eigenvectors and eignevalues, or to have a single
Hermitian $H$ so that $U=e^{iH}$. 
That is, we want a matrix logarithm $ iH$ of $U$.
Spectral theory tells us that
$U$ has a unitary diagonalization, since unitary implies normal. However, 
there is a lack of effective algorithms at present that find orthogonal eigenvectors
of normal matrices.  In the unitary case, we have at least two ways to proceed. 
If $U=e^{iH}$, then a unitary matrix that diagonalizes $H$ will diagonalize $H$.
Thus if we can do a good job computing the logarithm of a unitary
\cite{HighamEtAlSquareRootGroupPreserving} we have a method to unitarily
diagonalize the original unitary. The other method \cite{LoringLogOfUnitary}
is to compute a Schur decomposition of $U$ and simply zero-out any
stray terms about the diagonal in the triangular factor. Both methods
have limitations.

Tracking symmetries in systems is critical in present-day physics, for example
the symmetries corresponding to time-reversal.
The exact definition of time-reversal in a system with a time-varying
Hamiltonian is more complicated than for a system with a stationary
Hamiltonian, but in any case, all manner of symmetries can wind up
in the unitary that represents time-evolution. One important symmetry
for a unitary $U$ is that it be complex symmetric, $U^{\mathrm{T}}=U$,
which means there is a real symmetric $H$ so that $U=e^{iH}$. Here
the Schur decomposition does not work, at least with standard implementations.

Here we look at five symmetry classes of unitary matrices:  generic unitary, 
real orthogonal, complex symmetric, graded and self-dual.  The previous
method of using the Shur decomposition \cite{LoringLogOfUnitary} works
for three of these:  generic, self-dual and real orthogonal.  The self-dual
case uses a self-dual variation on the Shur decomposition.

The real orthogonal case was not implemented in \cite{LoringLogOfUnitary}.
We leave as an exercise for the reader utilizing the real form of the Schur decomposition
that reduces the problem to the two-by-two case.  One must be careful,
as a real logarithm is not always possible.  An exponential is in the connected
component of the identify, so if $K$ is real and skew symmetric then $U=e^{K}$ will
be real orthogonal with determinant one.   A real orthongonal matrix $U$ will have
a real logarithm if and only if  $\det(U)=1$.  An equivalent condition is that
$-1$ appear in the spectrum with even multiplicity.

The Schur decomposition is very easy to use to diagonalize a unitary matrix 
by a unitary--- one just zeros out the off-diagonal terms as needed to turn
upper-triangular into diagonal.  If one wants the logarithm, one simply
takes logarithm of the diagonal, a simple task.  The limitation is that, to our knowledge,
the Schur decomposition has only been developed for the general complex,
real orthogonal and self-dual unitary situations.   Thus we need to look 
for a different approach.

We look at a standard algorithm \cite[Ch.~11]{Higham}
for computing a matrix logarithm, which takes square root several times to
reduce to the situation where something like a power series can compute the logarithm. 
This algorithm requires the input matrix to have no spectrum on the
negative real axis, so at first glance this will restrict us to a unitary
matrix $U$ with $-1$ not in the spectrum.  In practice, this is not much of
a limitation.

In the algorithm, the eigenvalues at $-1$ are not stable, and so round-off
error is sufficient to make them quickly act as if they are not $-1$, unless
there is some manner of a $K$-theory index that is non-zero that
makes an eigenvalue at $-1$ stable.  This is fine, as the algorithm tends
to fail only when the logarithm does not exist anyway. 

The square-root algorithm we work with \cite{HighamEtAlSquareRootGroupPreserving} was designed to work for non-unitary matrices.  Indeed, the output can be far from
unitary when the input is very close to being unitary.  (The exact
condition $UU^\dagger = I$ almost never holds for a matrix
stored in floating point numbers.)  We make a simple modification
to the algorithm so that its output is very close to unitary when
the input is very close to unitary.

We find that this modified algorithm behaves quite nicely, even when applied
to most unitary matrices with $-1$ in the spectrum.  It will fail for a diagonal
unitary matrix with  $-1$ in the spectrum, but we hardly need a new
algorithm to  find a square root of a diagonal matrix.

Then we consider unitaries that are symmetric or have a grading symmetry.
The algorithm for square root can be further modified to enforce the needed symmetry
at every stage of the main iteration.  We are then able to create the desired 
matrix logarithm algorithm whose output has the corresponding symmetry.  
For example, for  complex symmetric unitary matrix, we expect, and can
compute, a logarithm of the form $iH$ with $H$ a real symmetric matrix.

For a recent physics paper that looks at unitaries with various symmetries,
and their spectrum, see \cite{Floquet-Block_with_symmetries}. For
a recent physics paper that looks at real symmetric $H$ with $e^{-iH}=U$
for $U$ a time evolution operator, see \cite{FloquetThermalization_with_Symmetries}.
A paper which computes an imaginary Hermitian $H$ so that $e^{-iH}=U$
for an orthogonal matrix $U$ is \cite{SurfaceHoppingAlgorithm}.

Our main concern is accuracy, not speed. An accurate log of a unitary
will be anti-Hermitian, have exponential close to the input matrix,
and be either real or purely imaginary when appropriate. An accurate
square root of a unitary $U$ will be a matrix $V$ with $V^{2}\approx U$
and $VV^\dagger\approx I$, and with $V$ having an additional symmetry
when appropriate. 

After applying the appropriate square root algorithm to $U$ several times, we can
apply a Pad\' e approximation, enforce the needed symmetries, and
end up with $H$ that is Hermitian and has additional symmetries.
We can apply a structured diagonalization algorithm to $H$ that 
then gives a  structured diagonalization of $U$.

Several interesting results corresponding to some prior work on the subject of structured matrix logarithms computation were reported in \cite{Cardoso2010ExponentialsOS} and \cite{Dieci1996ConsiderationsOC}, these papers address the need to get a unitary output when computing the square root of a unitary, although, to the best of our knowledge, the accuracy of practical applications of the methods proposed in these documents was not documented, and the cases of a small gaps at -1 were implicitly avoided.

The rest of the paper is organized as follows.
Section~\ref{sec:TimeEvoWithSym} looks at how symmetries in a time-varying
Hamiltonian lead to symmetries in the unitary propagator.  
Section~\ref{sec:Diagonalizing} takes a mathematical look at when structured logarithms and structured diagonalizations exist
for unitary matrices with symmetry.  
In Section~\ref{section:matrix_sqrt} some theoretical arguments and numerical methods for the computation of symmetry preserving matrix square roots, are presented. 
The numerical methods for the computation of symmetry preserving matrix logarithms are presented in Section~\ref{section:matrix_log}. In Section~\ref{sec:MatrixDiagAlg} some prototypical algorithms for symmetry preserving matrix diagonalization, that are based on the computational methods in Section~\ref{section:matrix_log}, are studied. 

\section{Time evolution, with symmetries}
\label{sec:TimeEvoWithSym}

\subsection{Floquet systems}

We consider very general Floquet systems, not simply the crystalline
systems, but also those based on quasicrystals and such. Thus we will
not have momentum space. Given a periodically varying Hamiltonian
$H(t)$, with time period $T$, the standard procedure is to consider
the Floquet operator, which is the time evolution operator $U(T)=U(0,T)$,
and then derive a Floquet Hamiltonian as some manner of a matrix logarithm.
We define this formally as
\begin{equation}
H_{F}=\frac{i}{T}\text{log}(U(T)).
\label{eq:H_F-by-log}
\end{equation}
We are assuming here a finite Hilbert space, so there will be at least a small
gap in the spectrum of $U$. If there are no symmetries to consider, then one can 
select a suitable branch of logarithm to use as $\text{log}$
in Equation~\ref{eq:H_F-by-log}.  For many symmetries, this will
not work unless one is lucky enough to have a gap at $-1$.
What one wants, in any case, is an operator $H_{F}$ such that
\begin{equation}
H_{F}^{\dagger}=H_{F}\label{eq:H_F-is-Hermitian}
\end{equation}
and
\begin{equation}
e^{-iTH_{F}}=U(T).\label{eq:H_F-is-a-log-of-U}
\end{equation}

Our focus is on disordered, defective, amorphous or quasicrystalline systems where 
numerical computations will be important, so we desire effective numerical
algorithms. The first step is computing the Floquet operator, by a time ordered
exponentiation, for example. We have little to add on the methods to use here. The
second step is computing the matrix logarithm of the Floquet operator.
This is our focus.

Symmetries play a key role in the study of Floquet topological insulators,
and arise in other aspects of physics. The definition of a symmetry
preserving Floquet system is generally defined in terms of symmetries
on the periodic path of Hamiltonians $H(t)$. This then manifests
itself as a symmetry on the Floquet operator. In theory then, the
Floquet Hamiltonian $H_{F}$ can be chosen to have an appropriate
symmetry. The challenge we take up here is finding algorithms to compute
$H_{F}$ from $U(T)$ so that $H_{F}$ has appropriate symmetries
and still satisfies Equations \ref{eq:H_F-is-Hermitian} and \ref{eq:H_F-is-a-log-of-U}.

To see how symmetries percolate from the $H(t)$ to the Floquet operator,
it is convenient to use the Suzuki-Trotter expansion 
\begin{equation}
U(T)=\lim_{N\rightarrow\infty}e^{-i(T/N)H(T)}\cdots e^{-i(T/N)H(2T/N)}e^{-i(T/N)H(T/N)}\label{eq:TotterExpansion}
\end{equation}
A nice overview of the theory of periodically driven systems, including
how to compute the Floquet operator, can be found in  \cite{NathanTopPDrivenSystems}. For large $N$ we have 
\[
e^{-i(T/N)H(T)}=e^{-i(T/N)H(0)}\approx I
\]
 and so we have also 
\begin{equation}
U(T)=\lim_{N\rightarrow\infty}e^{-i(T/N)H(T-T/N)}\cdots e^{-i(T/N)H(T/N)}e^{-i(T/N)H(0)}
\label{eq:TotterExpansion-alt}
\end{equation}
 which we will find useful when examining certain symmetries. In particular
notice this implies
\begin{equation}
U(T)^{\dagger}
=\lim_{N\rightarrow\infty}e^{i(T/N)H(0)}e^{i(T/N)H(T/N)}\cdots e^{i(T/N)H(T-T/N)}.
\label{eq:Totter_U_dagger}
\end{equation}

\subsection{Time Reversal Symmetry, $\mathcal{T}^{2}=I$ }

We consider now the case where $H(t)$ has time reversal symmetry with
$\mathcal{T}^{2}=I$. If we perform an appropriate orthogonal change
of basis we can assume the time reversal antilinear operator
$\mathcal{T}=\mathcal{T}^{-1}$
is just conjugation of vectors. For simplicity, we assume time-reversal
symmetry about $t=0$ so the assumption on the Hamiltonian is 
\[
H(-t)=\mathcal{T}\circ H(t)\circ\mathcal{T}.
\]
In terms of matrices, this becomes 
\[
H(-t)=H(t)^{*}
\]
or
\[
H(T-t)=H(t)^{*}
\]
 (The $*$ here means conjugation, so this is $H(-t)=\overline{H(t)}$
in mathematical notation.) We find then
\begin{align*}
U(T)^{*} 
& =\lim_{N\rightarrow\infty}
  e^{i(T/N)H(T)^{*}}\cdots e^{-i(T/N)H(2T/N)^{*}}e^{-i(T/N)H(T/N)^{*}}\\
 & =\lim_{N\rightarrow\infty}
   e^{i(T/N)H(0)}\cdots e^{i(T/N)H(T-2T/N)}e^{i(T/N)H(T-T/N)}
\end{align*}
and so, by Equation~\ref{eq:Totter_U_dagger}, we have shown $U(T)^{*}=U(T)^{\dagger}$.
In more familiar mathematical terms, this means $U(T)$ is a complex
symmetric unitary matrix, so 
\begin{equation}
U(T)=U(T)^{\top}
\label{eq:U(T)-symmetric}
\end{equation}
 where $\top$ denotes transpose.

\subsection{Time Reversal Symmetry, $\mathcal{T}^{2}=-I$ }

Should $H(t)$ have time reversal symmetry with $\mathcal{T}^{2}=-I$,
we can perform a similar analysis. Assume we have preformed the appropriate
orthogonal change of basis so that 
\begin{equation}
\mathcal{T}\left(\psi\right)=Z\overline{\psi}\label{eq:Time-reversal-operator-Fermionic}
\end{equation}
where
\[
Z=\left[\begin{array}{cc}
0 & I\\
-I & 0
\end{array}\right].
\]
Assuming time reversal about the origin, time reversal symmetry means
\[
H(-t)=\mathcal{T}\circ H(t)\circ\mathcal{T}
\]
but this now translates to
\[
H(-t)=ZH(t)^{*}Z.
\]
Let us use the dual operation
\[
X^{\sharp}=-ZX^{\top}Z.
\]
The dual operation is closely related to $\mathcal{T}$, as explained
in  \cite{LoringQMatrices}.  Time reversal symmetry now
becomes 
\[
H(-t)=\left(H(-t)\right)^{\dagger}=H(t)^{\sharp}.
\]
We find then
\begin{align*}
U(T)^{\sharp} & =\lim_{N\rightarrow\infty}e^{-i(T/N)H(T)^{\sharp}}\cdots e^{-i(T/N)H(2T/N)^{\sharp}}e^{-i(T/N)H(T/N)^{\sharp}}\\
 & =\lim_{N\rightarrow\infty}e^{-i(T-T/N)H(T/N)}e^{-i(T/N)H(T-2T/N)}\cdots e^{-i(T/N)H(0)}
\end{align*}
which means
\begin{equation}
U(T)^{\sharp}=U(T).\label{eq:U(T)-self-dual}
\end{equation}
Thus $U(T)$ is a unitary, self-dual matrix.

\subsection{Chiral Symmetry }

A simple form of Chiral symmetry we might find for some periodic systems
is
\[
\Gamma H(t)\Gamma=-H(-t)
\]
where $\Gamma$ is a unitary matrix that squares to one. A common
choice would be
\begin{equation}
\Gamma=\left[\begin{array}{cc}
-I_{N/2} & 0\\
0 & I_{N/2}
\end{array}\right]
\label{eq:Gamma_def}
\end{equation}
which of course means $N$ is even. 

Assuming this symmetry, we find
\[
\Gamma e^{X}\Gamma=\Gamma\left(\sum\frac{1}{n!}X^{n}\right)\Gamma=\sum\frac{1}{n!}\left(\Gamma X\Gamma\right)^{n}=e^{\Gamma X\Gamma}
\]
and 
\begin{align*}
\Gamma U(T)\Gamma & =\lim_{N\rightarrow\infty}e^{i(T/N)H(-T)}\cdots e^{i(T/N)H(-2T/N)}e^{i(T/N)H(-T/N)}\\
 & =\lim_{N\rightarrow\infty}\left(e^{-i(T/N)H(-T/N)}e^{-i(T/N)H(-2T/N)}\cdots e^{-i(T/N)H(-T)}\right)^{\dagger}\\
 & =\left(\lim_{N\rightarrow\infty}e^{-i(T/N)H(T-T/N)}e^{-i(T/N)H(T-2T/N)}\cdots e^{-i(T/N)H(0)}\right)^{\dagger}
\end{align*}
and finally
\begin{equation}
\Gamma U(T)\Gamma=U(T)^{\dagger}.
\label{eq:Chiral_simmetry_def}
\end{equation}

\section{Symmetry preserving Effective Hamiltonians}
\label{sec:Diagonalizing}

\subsection{Principal logarithms}

Since the Floquet operator $U(T)$ is unitary, in the finite dimensional
case it is a normal matrix and the spectral theorem applies. In terms
of matrices the spectral theorem for normal matrices tells us
\[
U(T)=QDQ^{\dagger}
\]
where $Q$ is another unitary matrix and $D$ is diagonal with diagonal
entries on the unit circle. In the equivalent picture of Floquet eigenstates
the statement is that there is a orthogonal basis of $\psi_{j}$ with
\[
U(T)\psi_{j}=e^{i\alpha_{j}}\psi_{j}, \quad (- \pi < \alpha_j \leq\pi ).
\]
An effective Hamiltonian is then
\begin{equation}
H_{F}=QAQ^{\dagger}\label{eq:H_F-by-spectral-decomp}
\end{equation}
where $A$ is diagonal with diagonal entries $\alpha_{1},\dots,\alpha_{N}$.

If we can find an additional symmetry in the orthogonal basis we should
be able to find an effective Hamiltonian with the desired symmetry.
This implication does not lead us to an algorithm, but it is illuminating.

It is the possibility of $-1$ in the spectrum that makes even the theory
more difficult.  If  $-1$ is not in the spectrum of $U(T)$ we can
use the functional calculus which, as has been noted many times before,
has a tendency to preserve symmetries.  Again, this does not lead us to an
algorithm.  It does provide a way to consider also non-unitary matrices, with 
symmetries, and how their principal logarithms pick up related
symmetries.

For a given matrix $X \in \mathbb{C}^{n\times n}$,  a logarithm 
$L\in \mathbb{C}^{n\times n}$ of $X$ is any matrix such that $e^L = X$.  We
assume that $X$ has no eigenvalues on $\mathbb{R}^{-} = (-\infty,0]$ so that the existence of a unique principal logarithm is assured as shown in the following theorem.

\begin{thm} \label{thm:Higham_matrix_log} \cite[Theorem 1.31.]{Higham}
Let $X \in \mathbb{C}^{n\times n}$ have no eigenvalues on $\mathbb{R}^{-}$. There is a unique logarithm $L$ of $X$  all of whose eigenvalues lie in the strip 
$\{z\in \mathbb{C} : -\pi < \mathrm{Im}(z) < \pi\}$.
We refer to $H$ as the principal logarithm of $X$ and write $H = \log (X)$. 
\end{thm}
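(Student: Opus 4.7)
The plan is to prove existence and uniqueness separately, reducing both to the Jordan form of $X$. Since $X$ has no eigenvalues on $\mathbb{R}^{-}$, in particular $0$ is not an eigenvalue, so $X$ is invertible and admits $X = Z J Z^{-1}$ with $J$ block-diagonal, each block of the form $J_k = \lambda_k I + N_k$ where $N_k$ is nilpotent and $\lambda_k \notin \mathbb{R}^{-}$. Since conjugation by $Z$ commutes with the matrix exponential and preserves spectra, the problem reduces to analyzing one Jordan block at a time.

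For existence, on each block I would define
\[
\log(J_k) \;:=\; (\log \lambda_k)\, I \;+\; \sum_{j \geq 1} \frac{(-1)^{j+1}}{j}\, \bigl(N_k/\lambda_k\bigr)^{j},
\]
where $\log \lambda_k$ is the scalar principal logarithm (imaginary part in $(-\pi,\pi)$), well-defined because $\lambda_k$ avoids $\mathbb{R}^{-}$. The series terminates since $N_k$ is nilpotent. Because $(\log \lambda_k) I$ commutes with $N_k$ and the formal identity $\exp(\log(1+u)) = 1 + u$ holds in any commutative nilpotent extension, exponentiating recovers $\lambda_k(I + N_k/\lambda_k) = J_k$. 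The only eigenvalue of $\log(J_k)$ is $\log \lambda_k$, which lies in the required open strip, so assembling the blocks and conjugating by $Z$ produces a logarithm $L$ with all eigenvalues in the strip.

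For uniqueness, let $L$ be any logarithm of $X$ whose eigenvalues lie in $\{-\pi < \mathrm{Im}(z) < \pi\}$. Since $L$ commutes with $e^L = X$, it preserves every generalized eigenspace of $X$. Restricted to the generalized eigenspace for eigenvalue $\lambda$, the spectrum of $L$ consists of scalar logarithms of $\lambda$, and the strip constraint forces each to equal the principal value $\log \lambda$; hence $L = (\log \lambda) I + M$ with $M$ nilpotent on that subspace, and $e^L = X$ reduces to $e^M = I + N/\lambda$. The main obstacle, and the technical heart of the argument, is showing that $M$ is uniquely determined by $e^M$ among nilpotent matrices. I would settle this by invoking the formal power series identity $\log(\exp(u)) = u$, valid in any associative algebra modulo its nilpotent ideal: applied to $M$, it expresses $M$ as a fixed polynomial in $e^M - I$, so $M$ must coincide with the block logarithm constructed above. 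Reassembling the blocks then forces $L$ to equal the matrix built in the existence step, giving uniqueness.
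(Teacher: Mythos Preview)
The paper does not supply its own proof of this theorem: it is quoted verbatim from Higham's monograph and used as a black box, so there is no in-paper argument to compare against. Your proposal is correct and is essentially the standard proof one finds in Higham --- reduce to Jordan form, build the logarithm blockwise using the scalar principal logarithm plus the terminating nilpotent series, and for uniqueness exploit that any logarithm commutes with $X$, hence respects the generalized eigenspace decomposition, after which the strip constraint pins down the semisimple part and the formal identity $\log(\exp(u))=u$ for nilpotent $u$ pins down the rest.

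One small point worth tightening: in your uniqueness step you pass from ``$L$ preserves each generalized eigenspace'' to analyzing $L$ blockwise, which implicitly uses that $\mathbb{C}^n$ decomposes as the direct sum of these $L$-invariant subspaces and that $L$ is therefore determined by its restrictions. This is immediate, but stating it explicitly would make the reassembly step at the end cleaner and would also make clear why the existence construction (which a priori depends on a choice of Jordan basis $Z$) in fact yields a well-defined, basis-independent matrix.
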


It is easy to check (using power series) that for any $X$ with spectrum
avoiding $\mathbb{R}^{-}$ that if $L$ is the principal logarithm of $X$ then
$ e^{L^{\dagger}}=X^{\dagger} $
so uniqueness tells us
\begin{equation} \label{log_of_adjoint}
\log(X^{\dagger})=\log(X)^{\dagger}.
\end{equation}

A useful formula for theoretical work is to express the principal logarithm via the
analytic functional calculus.  This works for bounded operators just as well as
for matrices.  Still assuming  $X$ have no spectrum on $\mathbb{R}^{-}$, we have
\[
\log (X) = \oint_\gamma\log(z)\left(z-X\right)^{-1}\,dz
\]
where $\gamma$ is a positively oriented contour that encloses the spectrum of
$X$ and is within the complement of  $\mathbb{R}^{-}$.

The following holds true for bounded operators on Hilbert space, provided one
uses a sensible definition of transpose, the dual operation and the
unitary symmetry $\Gamma$.  For matrices, these were defined in 
\S\ref{sec:TimeEvoWithSym}.  In the matrix case, Theorem~\ref{thm:FunCalcSymmetry}
is substantially the same as \cite[Theorem 3.1]{HighamEtAlSquareRootGroupPreserving}.

\begin{thm}  \label{thm:FunCalcSymmetry}
Assume $X \in \mathbb{C}^{n\times n}$ have no eigenvalues on $\mathbb{R}^{-}$ and
that $L$ is the principal logarithm of $X$.  Then the following are all true.
\begin{enumerate}
\item If $X$ is unitary then $L^{\dagger}=-L$.
\item If $X$ is complex symmetric then $L$ is complex symmetric.
\item If $X$ is real then $L$ is real.
\item If $X$ is self-dual then $L$ is self-dual.
\item If $\Gamma X\Gamma=X^{\dagger}$ then $\Gamma L\Gamma=L^{\dagger}$.
\end{enumerate}
\end{thm}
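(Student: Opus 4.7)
My plan is to deduce all five parts uniformly from the uniqueness clause of Theorem~\ref{thm:Higham_matrix_log}: for each symmetry, I will construct a candidate logarithm of either $X$ or $X^{\dagger}$ by applying the relevant symmetry operation to $L$, verify its spectrum still lies in the strip $\{-\pi < \mathrm{Im}(z) < \pi\}$, and invoke uniqueness of the principal logarithm to identify the candidate with the known one.

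The common preliminary step is to observe that every operation in play --- transpose, complex conjugation, adjoint, the dual $A \mapsto A^{\sharp} = -ZA^{\top}Z$, and conjugation $A \mapsto \Gamma A \Gamma$ with $\Gamma^{2} = I$ --- is continuous, is either complex-linear or antilinear, and satisfies $(A^{n})^{*} = (A^{*})^{n}$ for all $n \ge 0$; for the anti-multiplicative operations (transpose and $\sharp$), this is the point where taking powers of a single matrix saves us. Consequently each operation commutes with the exponential power series, i.e.\ $e^{A^{*}} = (e^{A})^{*}$. Moreover each operation permutes the spectrum of $A$, or sends it to the complex-conjugate spectrum; in either case the strip is preserved.

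With this machinery the five parts become short, parallel arguments. For (2), transposing $e^{L} = X = X^{\top}$ yields $e^{L^{\top}} = X$, and $L^{\top}$ has $L$'s spectrum, so uniqueness forces $L^{\top} = L$. Part (3) is the same argument with complex conjugation. For (4), applying $\sharp$ gives $e^{L^{\sharp}} = X^{\sharp} = X$, and using $Z^{-1} = -Z$ we see $L^{\sharp} = Z^{-1} L^{\top} Z$ is similar to $L^{\top}$ and hence has the same spectrum as $L$, so $L^{\sharp} = L$. For (1), unitarity gives $X^{\dagger} = X^{-1}$; since $-L$ is manifestly a logarithm of $X^{-1}$ with spectrum in the strip (which is symmetric under negation), uniqueness yields $\log(X^{-1}) = -L$, and then (\ref{log_of_adjoint}) gives $L^{\dagger} = -L$. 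For (5), $e^{\Gamma L \Gamma} = \Gamma X \Gamma = X^{\dagger}$; since $\Gamma L \Gamma$ and $L^{\dagger} = \log(X^{\dagger})$ both have spectrum in the strip, uniqueness applied to $\log(X^{\dagger})$ yields $\Gamma L \Gamma = L^{\dagger}$.

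The only place that needs genuine care is part (4), because $\sharp$ is anti-multiplicative: one has to notice that $(AB)^{\sharp} = B^{\sharp} A^{\sharp}$ nonetheless gives $(A^{n})^{\sharp} = (A^{\sharp})^{n}$ for a single matrix $A$, which is a one-line induction. Everything else is bookkeeping. An alternative route would be to push the symmetries through the Cauchy-type integral $\log(X) = \oint_{\gamma} \log(z)(z - X)^{-1}\,dz$ by changing variables $z \mapsto \bar z$ or $z \mapsto z$ and using that $\gamma$ can be chosen symmetric under conjugation; this version would generalize verbatim to bounded operators on infinite-dimensional Hilbert spaces, but for the matrix statement as written the uniqueness argument is cleaner.
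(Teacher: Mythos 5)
Your proof is correct, and it takes a genuinely different route from the paper's. You derive all five parts from the uniqueness clause of Theorem~\ref{thm:Higham_matrix_log}: each symmetry operation commutes with the exponential power series and preserves the open strip $\{-\pi<\mathrm{Im}(z)<\pi\}$ (by similarity of spectra, or by conjugation/negation symmetry of the strip), so the transformed matrix must coincide with the principal logarithm of the transformed input; combining with Equation~\ref{log_of_adjoint} handles the two parts, (1) and (5), where the adjoint enters. The paper instead proves (2), (4) and (5) by pushing the symmetry through the Cauchy integral $L=\oint_{\gamma}\log(z)(z-X)^{-1}\,dz$, proves (3) by combining the transpose identity with Equation~\ref{log_of_adjoint}, and proves (1) separately via normality and the spectral decomposition, where $L^{\dagger}=-L$ is read off directly. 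What your argument buys is economy and uniformity: no contour integral is needed, (1) follows from the observation that $-L$ is a logarithm of $X^{-1}=X^{\dagger}$ with spectrum in the strip, and (3) is handled directly by conjugation rather than by the slightly roundabout transpose-versus-adjoint comparison; your careful remark that anti-multiplicative operations (transpose, $\sharp$) still satisfy $(A^{n})^{\ast}=(A^{\ast})^{n}$ for a single matrix is exactly the point that needs stating. What the paper's route buys is the generalization it explicitly cares about: the contour-integral argument transfers verbatim to bounded operators on Hilbert space, where ``eigenvalues in the strip'' is no longer the right uniqueness criterion, whereas your uniqueness argument is intrinsically finite-dimensional as written --- a trade-off you correctly flag in your closing remark.
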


\begin{proof}
(1) 
When $X$ is unitary, it is normal and so the holomorphic functional
calculus equals the continuous functional calculus. That is, we
can compute  $L=\log(X)$ as in Equation~\ref{eq:H_F-by-spectral-decomp}
where it is evident that $L^\dagger = -L$ (notice in defining an effective
Hamiltonian we would use $-iL$).

For the other parts of the proof we need to use the holomorphic functional
calculus, so select $\gamma$, a
positively oriented loop that encloses the spectrum of $U$ and avoids
 $\mathbb{R}^{-}$.  Then
\begin{equation} \label{CauchyFormula}
 L=\oint_{\gamma}\log(z)\left(z-X\right)^{-1}\,dz .
\end{equation}

(2)  The transpose operation pulls inside a path integral, so
\begin{equation} \label{TransposeLog}
\log(X)^{\top}=\oint_{\gamma}\log(z)\left(z-X^{\top}\right)^{-1}\,dz=\log(X^{\top}).
\end{equation}
Thus the assumption $X^\top = X$ leads to $\log(X)^{\top} = \log(X)$.

(3) Now assume $X$ is real, so that $X^\dagger = X^\top$.  Then by Equation~\ref{TransposeLog} we have $\log(X^\dagger) = \log(X)^{\top}$, while by
Equation~\ref{log_of_adjoint} we have $\log(X^\dagger) = \log(X)^\dagger$.  Thus
$ \log(X)^{\top}= \log(X)^\dagger$ and so is also real.

(4)  This proof is similar to the proof of (2), relying on the formula
\begin{equation} \label{DualLog}
\log(X)^{\sharp}=\oint_{\gamma}\log(z)\left(z-X^{\sharp}\right)^{-1}\,dz=\log(X^{\sharp}).
\end{equation}

(5)  
Now consider the additional assumption that $\Gamma X\Gamma=X^{\dagger}$. Then
\begin{align*}
\Gamma L\Gamma 
 & =\oint_{\gamma}\Gamma\log(z)\left(z-X\right)^{-1}\Gamma\,dz\\
 & =\oint_{\gamma}\log(z)\left(z-\Gamma X\Gamma\right)^{-1}\,dz\\
 & =\oint_{\gamma}\log(z)\left(z-X^{\dagger}\right)^{-1}\,dz\\
 & =\log(X^{\dagger}) .
\end{align*}

 Thus we have proven $ \Gamma L\Gamma=L^{\dagger}$.

\end{proof}

In the following subsections, we examine the case of a unitary with $-1$
in the spectrum, assuming an additional symmetry.   We limit the
discussion to both cases of time reversal symmetry and Chiral
symmetry.

\subsection{Time Reversal Symmetry, $\mathcal{T}^{2}=I$ }

If the Floquet operator $U(T)$ is complex symmetric, as well as unitary,
we can use another variation on the spectral theorem. If we let
\[
X=\frac{1}{2}\left(U(T)^{\dagger}+U(T)\right)
\]
 and
\[
Y=\frac{i}{2}\left(U(T)^{\dagger}-U(T)\right)
\]
then one can easily verify that $X$ and $Y$ are commuting real symmetric
matrices with $U(T)=X+iY$. The spectral theorem for commuting real
symmetric matrices tells us 
\[
X=QGQ^{\top},\quad Y=QMQ^{\top}
\]
for some real orthogonal matrix $Q$ and diagonal matrices $G$ with
real diagonal elements $\gamma_{1},\dots,\gamma_{N}$ and $M$ with real diagonal elements
$\mu_{1},\dots,\mu_{N}$. If we set $D=G+iM$ then we will have achieved
\[
U(T)=QDQ^{\dagger}
\]
with $D$ diagonal with diagonal entries on the unit circle and now
$Q$ a real orthogonal matrix. This means we can select the Floquet
eigenstates to be real. Also Equation~\ref{eq:H_F-by-spectral-decomp}
now tells us that the effective Hamiltonian can be taken to be a real
symmetric matrix.

\subsection{Time Reversal Symmetry, $\mathcal{T}^{2}=-I$ }

If the Floquet operator $U(T)$ is self dual, as well as unitary,
we can write $U(T)=A+iB$ with $A$ and $B$ commuting self-dual,
Hermitian matrices. There is a spectral theorem for commuting Hermitian
self-dual matrices \cite[Theorem~2.4]{LoringQMatrices} that tells us 
\[
X=QGQ^{\dagger},\quad Y=QMQ^{\dagger}
\]
for $Q$ a unitary with $Q\circ\mathcal{T}=\mathcal{T}\circ Q$ and
where 
\[
G=\left[\begin{array}{cc}
G_{0} & 0\\
0 & G_{0}
\end{array}\right],\quad M=\left[\begin{array}{cc}
M_{0} & 0\\
0 & M_{0}
\end{array}\right]
\]
and $G_{0}$ and $M_{0}$ are diagonal with real diagonals $\gamma_{1},\dots,\gamma_{N/2}$
and $\mu_{1},\dots,\mu_{N/2}$. Here the $\mathcal{T}$ we have in
mind is as defined in Equation~\ref{eq:Time-reversal-operator-Fermionic}.
Again we set $D=G+iM$ and we have 
\[
U(T)=QDQ^{\dagger}
\]
for $Q$ unitary with $Q\circ\mathcal{T}=\mathcal{T}\circ Q$ and
\[
D=\left[\begin{array}{cc}
D_{0} & 0\\
0 & \overline{D_{0}}
\end{array}\right]
\]
with $D_{0}$ diagonal with diagonal $e^{i\alpha_{1}}.\dots,e^{i\alpha_{N/2}}$
and the $\alpha_{j}$ real. The condition $Q\circ\mathcal{T}=\mathcal{T}\circ Q$
means that column $j+N/2$ of $Q$ is the result of $\mathcal{T}$applied
to column $j$ of $Q$. Thus we can find an orthogonal basis of Floquet
eigenstates that appear in time reversal pairs,
$ \psi_{j},\mathcal{T}(\psi_{j}) $,
with 
\[
U(T)\psi_{j}=e^{i\alpha_{j}}\psi_{j}
\]
and
\[
U(T)\mathcal{T}(\psi_{j})=e^{-i\alpha_{j}}\mathcal{T}(\psi_{j}).
\]
This also means we can find an effective Hamiltonian that is a self-dual,
Hermitian matrix.

\subsection{Chiral Symmetry}

Assume $\Gamma$ is a unitary matrix with $\Gamma^{2}=I$. We only
concern ourselves with the case where the $+1$ and $-1$ eigenspaces
for $\Gamma$ are of the same dimension. Up to a unitary change of
basis, we are able to assume then 
\[
\Gamma=\left[\begin{array}{cc}
I_{N/2} & 0\\
0 & -I_{N/2}
\end{array}\right]
\]
where $N$ is even. 

The following must certainly have appeared somewhere in the literature.
As the proof is short, we include it.
\begin{thm}
\label{thm:Chiral_spectral_decomposition}
Suppose $U$ is a unitary matrix with 
\[
\Gamma U\Gamma=U^{\dagger}.
\]
Then there are matrices $Q$ and $D$ such that 
\[
D=\left[\begin{array}{cc}
D_{11} & D_{12}\\
D_{21} & D_{22}
\end{array}\right]
\]
has $N/2$-by-$N/2$ blocks that are diagonal, $Q$ is unitary with
\[
\Gamma Q\Gamma=Q
\]
and 
\[
U=QDQ^{\dagger}.
\]
Moreover, we can do all the above with the added condition that
the $D_{jk}$ are real matrices.
\end{thm}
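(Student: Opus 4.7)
The plan is to translate the chiral symmetry into block form and then reduce to the spectral theory of Hermitian matrices. Writing $\Gamma = \mathrm{diag}(I_{N/2}, -I_{N/2})$, the hypothesis $\Gamma U \Gamma = U^{\dagger}$ forces
\[
U = \begin{pmatrix} A & B \\ -B^{\dagger} & E \end{pmatrix}
\]
with $A$ and $E$ Hermitian, and then $UU^{\dagger} = I$ gives $A^{2} + BB^{\dagger} = I$, $B^{\dagger}B + E^{2} = I$, and $AB = BE$. The condition $\Gamma Q \Gamma = Q$ on the desired $Q$ is equivalent to $Q$ being block-diagonal, $Q = \mathrm{diag}(Q_{1}, Q_{2})$, so the task reduces to finding unitaries $Q_{1}, Q_{2}$ making each of $Q_{1}^{\dagger} A Q_{1}$, $Q_{2}^{\dagger} E Q_{2}$, and $Q_{1}^{\dagger} B Q_{2}$ real and diagonal.

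First I would diagonalize the Hermitian block $A$, choosing $Q_{1}$ with $Q_{1}^{\dagger} A Q_{1} = \Lambda$ real diagonal, and set $\tilde{B} := Q_{1}^{\dagger} B$. The relation $A^{2} + BB^{\dagger} = I$ then becomes $\tilde{B}\tilde{B}^{\dagger} = I - \Lambda^{2} =: \Sigma^{2}$, a non-negative real diagonal; this says the rows of $\tilde{B}$ are mutually orthogonal with norms $\sigma_{i} = \sqrt{1-\lambda_{i}^{2}}$. Hence I can write $\tilde{B} = \Sigma V^{\dagger}$ for some unitary $V$, by normalizing each row of $\tilde{B}$ with $\sigma_{i} > 0$ to produce a row of $V^{\dagger}$ and filling in the remaining rows (corresponding to $\lambda_{i} = \pm 1$) by any orthonormal completion. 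Conjugating $U$ by $\mathrm{diag}(Q_{1}, V)$ then produces the off-diagonal blocks $\tilde{B}V = \Sigma$ and $-\Sigma$, both real and diagonal.

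The last and most delicate step is showing that the bottom-right block $V^{\dagger} E V$ can be made diagonal. From $AB = BE$ I get $\Lambda \Sigma V^{\dagger} = \Sigma V^{\dagger} E$; for each index $i$ with $\sigma_{i} > 0$ this forces the $i$-th row of $V^{\dagger}$ to be a left eigenvector of $E$ for $\lambda_{i}$, so the corresponding rows of $V^{\dagger} E V$ read $\lambda_{i} e_{i}^{\top}$. By Hermiticity of $V^{\dagger} E V$, this propagates to columns and shows $V^{\dagger} E V$ decouples into the real diagonal block $\mathrm{diag}(\lambda_{i})_{i \in S}$ on $S := \{i : \sigma_{i} > 0\}$ and a Hermitian residual $Y$ on the complementary index set $S^{c}$; here $Y$ is the obstruction arising from the $\pm 1$ spectrum of $A$ and need not be diagonal. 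I expect this residual to be the main obstacle. The fix is to diagonalize $Y = W Y_{d} W^{\dagger}$ with $W$ unitary and real diagonal $Y_{d}$, then replace $V$ by $V \cdot \mathrm{diag}(I_{S}, W)$; because $\Sigma$ vanishes on the rows indexed by $S^{c}$, this modification leaves the off-diagonal blocks $\pm\Sigma$ unchanged. Taking $Q = \mathrm{diag}\bigl(Q_{1},\, V \cdot \mathrm{diag}(I_{S}, W)\bigr)$ then yields $U = QDQ^{\dagger}$ with all four blocks of $D$ simultaneously diagonal and real, proving both the main statement and the "moreover" clause at once.
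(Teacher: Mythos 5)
Your proof is correct, but it follows a genuinely different route from the paper. The paper argues by induction on $N$: for a non-real eigenvalue $\lambda$ of $U$ with unit eigenvector $\boldsymbol{v}$, the symmetry forces $\Gamma\boldsymbol{v}$ to be an eigenvector for $\overline{\lambda}$ orthogonal to $\boldsymbol{v}$, and the vectors $\tfrac{1}{\sqrt{2}}(\boldsymbol{v}\pm\Gamma\boldsymbol{v})$ supply one graded two-dimensional invariant subspace at a time; the degenerate case where all eigenvalues are $\pm1$ is handled by noting $U^{\dagger}=U$ then commutes with $\Gamma$, and the reality of the blocks is obtained afterwards by a separate two-by-two phase adjustment of $Q$. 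You instead exploit the block form that $\Gamma U\Gamma=U^{\dagger}$ imposes, namely $U=\left[\begin{smallmatrix}A & B\\ -B^{\dagger} & E\end{smallmatrix}\right]$ with $A,E$ Hermitian, and read off from unitarity the relations $A^{2}+BB^{\dagger}=I$, $B^{\dagger}B+E^{2}=I$, $AB=BE$; diagonalizing $A$, factoring $\tilde{B}=\Sigma V^{\dagger}$, and using $AB=BE$ to force the rows of $V^{\dagger}$ indexed by $S=\{i:\sigma_{i}>0\}$ to be eigenvectors of $E$ gives a simultaneous real-diagonal reduction in the style of a CS decomposition, with the residual Hermitian block $Y$ on $S^{c}$ absorbed by a further unitary $W$ that leaves $\pm\Sigma$ untouched precisely because $\Sigma$ vanishes there. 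I checked the delicate points: the decoupling of $V^{\dagger}EV$ across $S$ and $S^{c}$ does follow from Hermiticity once the $S$-rows are $\lambda_{i}e_{i}^{\top}$, and $\Sigma\,\mathrm{diag}(I_{S},W)=\Sigma$ holds, so the final $Q=\mathrm{diag}(Q_{1},V\,\mathrm{diag}(I_{S},W))$ does the job. What each approach buys: your argument is non-inductive, fully constructive (it is close to an algorithm), and delivers the ``moreover'' clause in the same pass, since $\Lambda$, $\Sigma$ and the bottom-right block come out real automatically; the paper's eigenvector-pairing induction is less explicit but produces exactly the structured pairs $\boldsymbol{v},\Gamma\boldsymbol{v}$ of Floquet eigenstates that the subsequent discussion (and the treatment of the $\pm1$ eigenspaces $E_{\pm1}^{\pm}$) relies on.
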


\begin{proof}
We will prove the first statement by induction on $N$. In the case case, $N=2$
and we can use $Q=I$ and $D=U$.

Now assume that $U$ is a unitary with this symmetry, that $N\geq4$,
and that the theorem is true for unitaries of size $(N-2)$-by-$(N-2)$. 
The spectral theorem for normal matrices applies to $U$. Thus, if
\[
U\boldsymbol{v}=\lambda\boldsymbol{v}
\]
for unit vector $\boldsymbol{v}$ then also $U^{\dagger}\boldsymbol{v}=\overline{\lambda}\boldsymbol{v}.$
Thus $\Gamma U\Gamma\boldsymbol{v}=\overline{\lambda}\boldsymbol{v}$
which implies 
\[
U\Gamma\boldsymbol{v}=\overline{\lambda}\Gamma\boldsymbol{v}.
\]
Of course $\lambda$ must be on the unit circle.

First suppose there is some unit eigenvector $\boldsymbol{v}$ for
$U$ for which $\lambda$ is not real. Then $\Gamma\boldsymbol{v}$
is an eigenvector for  $\overline{\lambda}$, which is not equal to
$\lambda$, which implies that $\Gamma\boldsymbol{v}$ is orthogonal
to $\boldsymbol{v}$.
Consider the two vectors
\[
\boldsymbol{w}_{\pm}=\frac{1}{\sqrt{2}}\boldsymbol{v}\pm\frac{1}{\sqrt{2}}\Gamma\boldsymbol{v}.
\]
These are both unit vectors, and
\[
\Gamma\boldsymbol{w}_{\pm}=\pm\boldsymbol{w}_{\pm}
\]
so $\boldsymbol{w}_{+}$ is in $\mathbb{C}^{N/2}\oplus0$ while $\boldsymbol{w}_{-}$
is in $0\oplus\mathbb{C}^{N/2}$. Thus we can find an orthogonal basis
of the form
\[
\boldsymbol{b}_{1},\dots,\boldsymbol{b}_{N/2},\boldsymbol{c}_{1},\dots,\boldsymbol{c}_{N/2}
\]
with $\boldsymbol{b}_{1}=\boldsymbol{w}_{+}$, $\boldsymbol{c}_{1}=\boldsymbol{w}_{-}$,
each $\boldsymbol{b}_{k}$ in $\mathbb{C}^{N/2}\oplus0$ and each
$\boldsymbol{c}_{k}$ in $0\oplus\mathbb{C}^{N/2}$. Notice that $\{\boldsymbol{b}_{1},\boldsymbol{c}_{1}\}$
spans an invariant subspace for $U$, and so also the span of 
\[
\boldsymbol{b}_{2},\dots,\boldsymbol{b}_{N/2},\boldsymbol{c}_{2},\dots,\boldsymbol{c}_{N/2}
\]
is an invariant subspace for $U$. It is also an invariant subspace
for $\Gamma$, and the restriction of $\Gamma$ to that subspace has 
still balanced $+1$ and $-1$ eigenspaces. By the induction
hypothesis, we can find a new basis 
\[
\tilde{\boldsymbol{b}}_{2},\dots,\tilde{\boldsymbol{b}}_{N/2},\tilde{\boldsymbol{c}}_{2},\dots,\tilde{\boldsymbol{c}}_{N/2}
\]
for this subspace so that each $\text{span}(\tilde{\boldsymbol{b}}_{k},\tilde{\boldsymbol{c}}_{k})$
is an invariant subspace for $U$. Thus
\[
\boldsymbol{b}_{1},\tilde{\boldsymbol{b}}_{2},\dots,\tilde{\boldsymbol{b}}_{N/2},\boldsymbol{c}_{1},\tilde{\boldsymbol{c}}_{2},\dots,\tilde{\boldsymbol{c}}_{N/2}
\]
gives the desired basis needed to define $Q$.

The remaining case is where $U$ has eigenvalues only at $\pm1$.
This means $U^{\dagger}=U$ and so $\Gamma$ commutes with $U$. Commuting
Hermitian matrices can be diagonalized by one unitary $Q$. Commuting
with $\Gamma$ means $\Gamma Q\Gamma=Q$. Since $D=Q^{\dagger}UQ$ will
be diagonal, it will certainly have the desired diagonal blocks.

To get the second statement, it suffices to understand the 2-by-2 case.  
Given
\[
U=\left[\begin{array}{cc}
a & b\\
c & d
\end{array}\right]
\]
with $\Gamma U\Gamma=U^{\dagger},$ since
\[
\Gamma U\Gamma=\left[\begin{array}{cc}
a & -b\\
-c & d
\end{array}\right]
\]
we see that $a$ and $d$ are already real, and $c=\overline{b}$.
If $|\gamma|=1$ is the phase such that $\gamma\overline{b}$ is real,
then the equation
\[
\left[\begin{array}{cc}
1 & 0\\
0 & \gamma
\end{array}\right]\left[\begin{array}{cc}
a & b\\
\overline{b} & d
\end{array}\right]\left[\begin{array}{cc}
1 & 0\\
0 & \gamma
\end{array}\right]^{\dagger}=\left[\begin{array}{cc}
a & \overline{\gamma}b\\
\gamma\overline{b} & d
\end{array}\right]
\]
show how to alter $Q$ to still commute with $\Gamma$ but not make
the blocks real as well as diagonal. 
\end{proof}

If there are no eigenvalues of $U$ equal to $\pm1$ then the proof
just given indicates how to select eigenvectors in pairs of the form
$\boldsymbol{v},\Gamma\boldsymbol{v}$.
The eigenspaces of $U$ for $\lambda=\pm 1$ need special care, and it
is simply not always possible to form the pairs desired.  We need to assume
more about $U$. 

Let us now consider the $\lambda = 1$ eigenspace $E_{+1}$ of $U$.
If $U\boldsymbol{v}=\boldsymbol{v}$ then $U\Gamma\boldsymbol{v}=\Gamma\boldsymbol{v}$
and so $E_{+1}$ is an invariant subspace of $\Gamma$.
Thus we can decompose $E_{+1}$ into
\[
E_{+1}=E_{+1}^{+}\oplus E_{+1}^{-}
\]
where $\Gamma$ acts like $1$ on vectors in $E_{+1}^{+}$ and like
$-1$ on vectors in $E_{+1}^{-}$. If 
\[
\dim(E_{+1}^{+})=\dim(E_{+1}^{-})
\]
then we can get structured eigenvectors within $E_{+1}$ as follows. Take
any orthonomal basis
$ \boldsymbol{e}_{1},\dots,\boldsymbol{e}_{k} $
of $E_{+1}^{+}$ and any orthonomal basis
$\boldsymbol{f}_{1},\dots,\boldsymbol{f}_{k}$ of $E_{+1}^{-}$. Set
\[
\boldsymbol{v}_{j}=\frac{1}{\sqrt{2}}\boldsymbol{e}_{j}+\frac{1}{\sqrt{2}}\boldsymbol{f}_{j}
\]
and notice
\[
\Gamma\boldsymbol{v}_{j}=\frac{1}{\sqrt{2}}\boldsymbol{e}_{j}-\frac{1}{\sqrt{2}}\boldsymbol{f}_{j}.
\]
Then
$
\boldsymbol{v}_{1},\Gamma\boldsymbol{v}_{1},\dots,\boldsymbol{v}_{k},\Gamma\boldsymbol{v}_{k}
$
is a basis of the eigenspace of $U$ for $+1$.
 
The eigenspace for  $\lambda = -1$ is dealt with similarly.
The eigenvalues that are not real lead to structured pairs as discussed in the proof of Theorem~\ref{thm:Chiral_spectral_decomposition}.

In practice, computing the spaces $E_{-1}^{\pm}$ is not a numerically
stable operation. Forturnately, there is an invariant (coming from
$K$-theory, ultimately) that is stable to compute and that tells
us if $\dim(E_{-1}^{+})$ equals $\dim(E_{-1}^{-})$ .
We stick with the special case where $\Gamma$ has eigenvalues
$\pm 1$ of equal multiplicity so that the obstruction for
the $E_{-1}$ vanishes exactly when the the obstruction for
the $E_{+1}$ vanishes, and so we need only a single invariant.

This invariant uses the signature. For an invertible Hermitian matrix
$X$, the signature of $X$, denoted $\sig(X)$, is the number of
positive eigenvalues of $X$ minus the number of negative eigenvalues
of $X$, each counted with multiplicity.

\begin{lem}
\label{lem:Sig_zero_balnance_Espaces}If $U$ is a unitary with $\Gamma U\Gamma=U^{\dagger}$
then $U\Gamma$ is Hermitian and invertible,
\begin{equation} \label{eqn:AIII_index}
\frac{1}{2}\sig(U\Gamma)
\end{equation}
 is an integer, and 
\[
\frac{1}{2}\sig(U\Gamma)=0
\]
if and only if 
\begin{equation}
\dim(E_{-1}^{+})=\dim(E_{-1}^{-})\label{eq:balanced_E_-1}
\end{equation}
for the subspaces $E_{-1}^{\pm}$ defined as above.
\end{lem}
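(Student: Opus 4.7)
The plan is to reduce the signature of $U\Gamma$ to the dimensions of the four $\Gamma$-graded pieces of $E_{+1}$ and $E_{-1}$, then exploit the balance of $\Gamma$ to collapse the two obstructions at $\pm 1$ into one.

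First I would verify the structural claims. From $\Gamma^\dagger = \Gamma$, $\Gamma^2 = I$, and the hypothesis $\Gamma U \Gamma = U^\dagger$, a direct computation gives $(U\Gamma)^\dagger = \Gamma U^\dagger = \Gamma(\Gamma U \Gamma) = U\Gamma$, so $U\Gamma$ is Hermitian, and it is invertible because $U$ and $\Gamma$ are. Multiplying $\Gamma U \Gamma = U^\dagger$ on the left by $U$ yields $(U\Gamma)^2 = U U^\dagger = I$, so the spectrum of $U\Gamma$ lies in $\{+1,-1\}$ and its signature is the difference of the two eigenspace dimensions, automatically of the same parity as $N$.

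Next I would decompose $\mathbb{C}^N$ into $U$-invariant pieces compatible with $\Gamma$ and read off the contribution of each to $\mathrm{sig}(U\Gamma)$. On the real part of the spectrum of $U$, the eigenspaces $E_{\pm 1}$ are $\Gamma$-invariant (since $U\boldsymbol{v} = \pm \boldsymbol{v}$ implies $U\Gamma\boldsymbol{v} = \Gamma U^\dagger \boldsymbol{v} = \pm \Gamma \boldsymbol{v}$), so they split as $E_{+1} = E_{+1}^+ \oplus E_{+1}^-$ and $E_{-1} = E_{-1}^+ \oplus E_{-1}^-$. On each of these pieces $U\Gamma$ acts as a scalar: I would check that it acts as $+1$ on $E_{+1}^+$ and $E_{-1}^-$, and as $-1$ on $E_{+1}^-$ and $E_{-1}^+$. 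For a non-real eigenvalue $\lambda$ of $U$ with eigenvector $\boldsymbol{v}$, the proof of Theorem~\ref{thm:Chiral_spectral_decomposition} already supplies the pair $\{\boldsymbol{v},\Gamma \boldsymbol{v}\}$ as an orthogonal basis of a $U$-invariant subspace; in that basis the restriction of $U\Gamma$ is the anti-diagonal $\bigl[\begin{smallmatrix}0 & \lambda \\ \bar\lambda & 0\end{smallmatrix}\bigr]$, which is Hermitian with eigenvalues $\pm|\lambda| = \pm 1$ and hence contributes $0$ to the signature. Summing,
\begin{equation*}
\mathrm{sig}(U\Gamma) = \dim(E_{+1}^+) - \dim(E_{+1}^-) - \dim(E_{-1}^+) + \dim(E_{-1}^-).
\end{equation*}

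The key remaining observation is a balance identity for $\Gamma$. Each complex-conjugate eigenpair of $U$ contributes one dimension to the $+1$-eigenspace of $\Gamma$ and one to the $-1$-eigenspace (via the vectors $\frac{1}{\sqrt 2}(\boldsymbol{v}\pm\Gamma\boldsymbol{v})$ as in the proof of Theorem~\ref{thm:Chiral_spectral_decomposition}). Since $\Gamma$ has $+1$ and $-1$ eigenspaces of equal dimension $N/2$ by standing assumption, cancelling the complex contributions against each other gives
\begin{equation*}
\dim(E_{+1}^+) + \dim(E_{-1}^+) = \dim(E_{+1}^-) + \dim(E_{-1}^-),
\end{equation*}
i.e.\ $\dim(E_{+1}^+) - \dim(E_{+1}^-) = \dim(E_{-1}^-) - \dim(E_{-1}^+)$. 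Substituting into the signature formula yields $\mathrm{sig}(U\Gamma) = 2\bigl(\dim(E_{-1}^-) - \dim(E_{-1}^+)\bigr)$, from which the integrality of $\tfrac12\mathrm{sig}(U\Gamma)$ and the equivalence with \eqref{eq:balanced_E_-1} follow at once.

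The main obstacle, as I see it, is bookkeeping the complex-eigenvalue contribution correctly: making sure the pair $\{\boldsymbol{v},\Gamma\boldsymbol{v}\}$ is genuinely orthonormal (which requires $\lambda \neq \bar\lambda$ and $\|\boldsymbol{v}\|=1$, with $\|\Gamma\boldsymbol{v}\|=1$ since $\Gamma$ is unitary), and that the full space really decomposes as a direct sum of the four real pieces plus these $2$-dimensional $U$-invariant blocks. Once that decomposition is in place, the rest is linear bookkeeping.
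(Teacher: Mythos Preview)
Your proof is correct. Both your argument and the paper's ultimately rest on the spectral decomposition of $U$, but the executions differ. The paper invokes Theorem~\ref{thm:Chiral_spectral_decomposition} to conjugate $U\Gamma$ to $D\Gamma$ with $D$ in $2\times 2$ block form, and then checks the signature and the balance condition case-by-case on each $2\times 2$ block (rotations and diagonal $\pm 1$ blocks). You instead work directly with the eigenspaces of $U$, compute the action of $U\Gamma$ on each $\Gamma$-graded piece, and use the balance of the $\Gamma$-eigenspaces to collapse the four-term signature expression into the closed formula $\sig(U\Gamma)=2\bigl(\dim E_{-1}^{-}-\dim E_{-1}^{+}\bigr)$. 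Your route is a bit more self-contained (it only needs the elementary observation that $\Gamma$ swaps the $\lambda$ and $\bar\lambda$ eigenspaces, not the full block decomposition) and it yields that explicit formula, which is slightly sharper than what the paper records; the paper's route is shorter because the bookkeeping is already packaged into the prior theorem.
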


\begin{proof}
We know $\Gamma U^{\dagger}=U\Gamma$ and so
\[
\left(U\Gamma\right)^{\dagger}=\Gamma U^{\dagger}=U\Gamma.
\]
Notice $U\Gamma$ is a product of unitary matrices, so is unitary,
so is very much invertible. Since $N$ is even, the signature of $U\Gamma$
must be even. We can decompose $U$ as $U=QDQ^{\dagger}$ for $D$
with diagonal blocks and $Q$ a unitary with $\Gamma Q\Gamma=Q$. Then
\[
Q^{\dagger}U\Gamma Q=Q^{\dagger}UQ\Gamma=D\Gamma.
\]
 The signature is invariant under unitary conjugation, so
\[
\frac{1}{2}\sig(U\Gamma)=\frac{1}{2}\sig(D\Gamma).
\]
We can thus reduce to the case of $D$ with diagonal blocks. As both the signature
and Equation~\ref{eq:balanced_E_-1} respect direct sums, we can
reduce further to the two-by-two case. 

If $N=2$ then 
\[
D=\left[\begin{array}{cc}
\cos(\theta) & -\sin(\theta)\\
\sin(\theta) & \cos(\theta)
\end{array}\right]
\]
for some $\theta$ in $[0,2\pi]$ or 
\[
D=\left[\begin{array}{cc}
1 & 0\\
0 & 1
\end{array}\right]
\]
or 
\[
D=\left[\begin{array}{cc}
-1 & 0\\
0 & -1
\end{array}\right].
\]
The theorem is easy to check in all three cases.
\end{proof}

\begin{lem}
Suppose $U$ is a unitary with $\Gamma U\Gamma=U^{\dagger}$.  If there
is a Hermitian matrix $H$ with 
$
\Gamma H=-H\Gamma
$
and
$
e^{-iH}=U
$
then
\[
\frac{1}{2}\sig(U\Gamma)=0 .
\]
\end{lem}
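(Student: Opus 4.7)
The plan is to connect $U\Gamma$ to $\Gamma$ by a continuous path of invertible Hermitian matrices and then invoke the homotopy invariance of signature. Concretely, I would set $U(s) = e^{-isH}$ for $s \in [0,1]$, so that $U(0)=I$ and $U(1)=U$, and study the path $s \mapsto U(s)\Gamma$.

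First I would check that $U(s)\Gamma$ is Hermitian for every $s$. The anticommutation $\Gamma H = -H\Gamma$ propagates to powers via $\Gamma H^n \Gamma = (-H)^n$, so term-by-term in the power series one gets $\Gamma e^{isH}\Gamma = e^{-isH}$, equivalently $\Gamma e^{isH} = e^{-isH}\Gamma$. Consequently
\[
(U(s)\Gamma)^{\dagger} = \Gamma U(s)^{\dagger} = \Gamma e^{isH} = e^{-isH}\Gamma = U(s)\Gamma.
\]
Moreover $U(s)\Gamma$ is a product of invertible matrices and hence invertible, so $s \mapsto U(s)\Gamma$ is a continuous path inside the open set of invertible Hermitian matrices. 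At $s=1$ we recover the matrix $U\Gamma$ from Lemma~\ref{lem:Sig_zero_balnance_Espaces}, and at $s=0$ we get $\Gamma$ itself.

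Next I would use the fact that the eigenvalues of a Hermitian matrix depend continuously on its entries (as real numbers), and none of them can cross zero along a path of invertible Hermitian matrices. Therefore the number of positive eigenvalues and the number of negative eigenvalues are constant along $s \mapsto U(s)\Gamma$, which gives
\[
\sig(U\Gamma) = \sig(U(1)\Gamma) = \sig(U(0)\Gamma) = \sig(\Gamma).
\]
Under the standing assumption that $\Gamma$ has equal-dimensional $+1$ and $-1$ eigenspaces, $\sig(\Gamma)=0$, so $\tfrac{1}{2}\sig(U\Gamma)=0$ as claimed.

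The only nontrivial step is the anticommutation-to-conjugation calculation $\Gamma e^{isH}\Gamma = e^{-isH}$; once this is in hand, the rest is the (standard but essential) observation that signature is locally constant on the space of invertible Hermitian matrices. I expect no further obstacles since Lemma~\ref{lem:Sig_zero_balnance_Espaces} has already established that $U\Gamma$ is Hermitian and invertible, which matches the endpoint of our path.
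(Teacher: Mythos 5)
Your proof is correct and follows essentially the same route as the paper: deform $U\Gamma$ to $\Gamma$ along the path $e^{-isH}\Gamma$ and use constancy of the signature along a path of invertible Hermitian matrices, with $\sig(\Gamma)=0$ at the endpoint. You simply make explicit the details the paper leaves implicit, namely the Hermiticity of $e^{-isH}\Gamma$ via $\Gamma e^{isH}\Gamma=e^{-isH}$ and the local constancy of the signature.
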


\begin{proof}
We note first
\[
\frac{1}{2}\sig(I\Gamma)=0
\]
 and that 
\[
\frac{1}{2}\sig(e^{-itH}\Gamma)
\]
varies continuously in $t$. As this is always an integer it is constant,
so
\[
0=\frac{1}{2}\sig(e^{-itH}\Gamma)=\frac{1}{2}\sig(U\Gamma).
\]
\end{proof}

\begin{rem}
If a unitary $U$ with $\Gamma U \Gamma = U^\dagger$ has a square root
$V$ that is also a unitary with $\Gamma V \Gamma = V^\dagger$,
then $V$ has a structured logarithm and so also does $U$.  Thus
$\tfrac{1}{2}\sig(U\Gamma) \neq 0$ is also an obstruction to
finding structured square roots.
\end{rem}

Now we see this index gives us the only obstruction to finding
a structured logarithm.

\begin{thm}
\label{thm:ChiralLogarithm}
If $U$ is a unitary with $\Gamma U\Gamma=U^{\dagger}$ and
\[
\frac{1}{2}\sig(U\Gamma)=0
\]
then there is a Hermitian matrix $H$ with 
$\Gamma H=-H\Gamma$ and $e^{-iH}=U$.
\end{thm}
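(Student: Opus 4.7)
The plan is to reduce, by Theorem~\ref{thm:Chiral_spectral_decomposition}, to the case that $U = D$ is essentially a direct sum of very small blocks, and then to build a structured logarithm block by block, using the signature assumption only to rule out a particular obstructed configuration.

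First I would write $U = Q D Q^{\dagger}$ as in Theorem~\ref{thm:Chiral_spectral_decomposition}, with the blocks $D_{jk}$ real and $\Gamma Q \Gamma = Q$; the latter means $Q$ commutes with $\Gamma$, so if $H_D$ is Hermitian with $\Gamma H_D = -H_D \Gamma$ and $e^{-iH_D} = D$, then $H = Q H_D Q^{\dagger}$ has all the desired properties with respect to $U$. Since $Q$ commutes with $\Gamma$, the signature hypothesis also transfers: $\sig(U\Gamma) = \sig(D\Gamma)$. Permuting the basis to interleave the two halves turns $D$ into a direct sum of $N/2$ real $2\times 2$ blocks $\tilde D_j$, each satisfying $\tilde \Gamma \tilde D_j \tilde \Gamma = \tilde D_j^{\top}$ with $\tilde \Gamma = \mathrm{diag}(1,-1)$.

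Next I would classify these $2\times 2$ blocks. A short calculation from $\tilde \Gamma \tilde D_j \tilde \Gamma = \tilde D_j^{\top}$ together with orthogonality forces each $\tilde D_j$ either to be a real rotation $R(\theta_j)$ or to be diagonal with entries in $\{\pm 1\}$. For rotations I would take $\tilde H_j = \theta_j \sigma_y$: since $\sigma_y$ anticommutes with $\tilde \Gamma$ and $\sigma_y^{2} = I$, one gets $e^{-i\tilde H_j} = \cos\theta_j\, I - i\sin\theta_j\, \sigma_y = R(\theta_j)$. For $\tilde D_j = I$ take $\tilde H_j = 0$, and for $\tilde D_j = -I$ take $\tilde H_j = \pi \sigma_x$, which is off-diagonal in the $\tilde \Gamma$-grading and satisfies $e^{-i\pi\sigma_x} = -I$.

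The remaining possibilities are the two obstructed blocks $\mathrm{diag}(1,-1)$ and $\mathrm{diag}(-1,1)$. A direct computation shows that these contribute $+2$ and $-2$, respectively, to $\sig(D\Gamma)$, while every other block type contributes $0$; so the hypothesis $\tfrac{1}{2}\sig(U\Gamma) = 0$ forces equal numbers of the two kinds. I would pair them off in $4\times 4$ subspaces where, after reordering, $\Gamma_4 = \mathrm{diag}(1,1,-1,-1)$ and $U_4 = \mathrm{diag}(1,-1,-1,1)$. Set $H_4$ to be the Hermitian matrix with entries $\pi$ in positions $(2,3)$ and $(3,2)$ and zeros elsewhere. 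Then $H_4$ anticommutes with $\Gamma_4$ because it is off-diagonal in the grading, and $H_4^{2} = \pi^{2}\,\mathrm{diag}(0,1,1,0)$ quickly yields $e^{-iH_4} = U_4$.

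The main obstacle is the obstructed $2\times 2$ blocks: a quick calculation shows that any $2\times 2$ Hermitian $H$ anticommuting with $\tilde \Gamma$ squares to a scalar, and so $e^{-iH}$ can only ever be $\pm I$ on such a block, never $\mathrm{diag}(\pm 1, \mp 1)$. So the crux is recognizing that the signature assumption is exactly what forces these blocks to appear in canceling pairs that can be merged to $4\times 4$ blocks in which a structured logarithm does exist. Direct-summing the $\tilde H_j$ for the good blocks with the $H_4$ for each paired group produces a Hermitian $H_D$ with $\Gamma H_D = -H_D \Gamma$ and $e^{-iH_D} = D$, and conjugating back by $Q$ finishes the proof.
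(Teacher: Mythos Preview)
Your argument is correct and follows essentially the same route as the paper: reduce via Theorem~\ref{thm:Chiral_spectral_decomposition} to $D$ with real $2\times 2$ blocks, use $\theta\sigma_y$ on the rotation blocks, and invoke the signature hypothesis to pair off the obstructed diagonal blocks. The only cosmetic difference is in that last step: the paper re-permutes the stray $\pm 1$ diagonal entries (which is legitimate since those blocks are already diagonal and a permutation within each $\Gamma$-graded half commutes with $\Gamma$) so that everything becomes a $B_\theta$ block, whereas you keep the obstructed pairs together as explicit $4\times 4$ blocks with the logarithm $H_4$. Your version is arguably a bit more transparent, since the paper's phrase ``combine the one-by-one blocks into $B_\pi$ and $B_{-\pi}$'' leaves the reader to unpack that the balance of $E_{+1}^{\pm}$ (not just $E_{-1}^{\pm}$) is also needed and follows from $\Gamma$ having balanced eigenspaces.
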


\begin{proof}
In the special case where $-1$ is not in the spectrum,we just use the standard branch of logarithm and define $H=i\log(U)$.  Theorem~\ref{thm:FunCalcSymmetry} tells us
that $H^\dagger = H$ and $\Gamma H \Gamma = -H$. 
If $-1$ is in the spectrum of $U$, one cannot use any other branch of
logarithm as these fail to have the correct symmetry and $H$ will as well.
We need to carefully split apart the eigenspace of $-1$ and assign some parts the new eigenvalue $\pi$, and assign other parts the eigenvalue $-\pi$ .

Once more we decompose $U$ as $U=QDQ^{\dagger}$ for $D$ real and block diagonal,
and $Q$ a unitary with $\Gamma Q\Gamma=Q$. We are assuming $\frac{1}{2}\mathrm{\text{sig}}(U\Gamma)=0$,
or equivalently $\frac{1}{2}\mathrm{\text{sig}}(D\Gamma)=0$. Next
we find $K$ with  $\Gamma K=-K\Gamma$ and $ e^{-iK}=D$.
 This can be done in (graded) two-by-two blocks, which we can assume
only come in the form 
\[
B_{\theta}=\left[\begin{array}{cc}
\cos(\theta) & -\sin(\theta)\\
\sin(\theta) & \cos(\theta)
\end{array}\right]
\]
for various $\theta$ in $(-\pi,\pi]$. Since the index invariant
is zero, by Lemma~\ref{lem:Sig_zero_balnance_Espaces}, we can combine
the one-by-one blocks into $B_{\pi}$ and $B_{-\pi}$. Since
\[
B_{\theta}=Q_{\theta}D_{\theta}Q_{\theta}^{\dagger}
\]
for
\[
D_{\theta}=\left[\begin{array}{cc}
\cos(\theta)-i\sin(\theta) & 0\\
0 & \cos(\theta)+i\sin(\theta)
\end{array}\right]
\]
and
\[
Q_{\theta}=\frac{1}{\sqrt{2}}\left[\begin{array}{cc}
-i & i\\
1 & 1
\end{array}\right]
\]
 we can define
\[
H_{\theta}=Q_{\theta}\left[\begin{array}{cc}
-\theta & 0\\
0 & \theta
\end{array}\right]Q_{\theta}^{\dagger}=\left[\begin{array}{cc}
0 & i\theta\\
-i\theta & 0
\end{array}\right].
\]
Taking blocked direct sums of these and we find a block-diagonal real
matrix $K$ of the form
\[
K=\left[\begin{array}{cc}
0 & K_{0}\\
-K_{0} & 0
\end{array}\right]
\]
 with $e^{-i K}=D$. The desired $H$ is $QKQ^{\dagger}$.
\end{proof}

\begin{lem}
\label{lem:Chiral_implies_signature_zero} If the Floquet Hamiltonian
$H(t)$ has chiral symmety $\Gamma H(t)\Gamma=-H(-t)$, for $\Gamma$
with eigenspaces for $-1$ and for $+1$ of the same dimension, then
\[
\frac{1}{2}\sig(U(T)\Gamma)=0.
\]
\end{lem}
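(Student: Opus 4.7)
My plan is to prove this by a homotopy/continuity argument that mirrors the proof of the preceding lemma, replacing the deformation $e^{-itH}$ of a single Hamiltonian by a one-parameter scaling of the \emph{path} of Hamiltonians. Specifically, for $s \in [0,1]$ consider the family $H_s(t) = s\,H(t)$, and let $U_s(T)$ denote the Floquet operator associated with $H_s$. Two things are immediate: each $H_s$ inherits the chiral symmetry $\Gamma H_s(t)\Gamma = -H_s(-t)$ from $H$, and $H_0 \equiv 0$ so that $U_0(T) = I$.

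The next step is to run the Suzuki--Trotter argument from \S\ref{sec:TimeEvoWithSym} on each $H_s$. Since chirality is preserved under scaling, the derivation leading to Equation~\ref{eq:Chiral_simmetry_def} applies verbatim to give $\Gamma U_s(T) \Gamma = U_s(T)^\dagger$ for all $s$. By the first part of Lemma~\ref{lem:Sig_zero_balnance_Espaces}, this means $U_s(T)\Gamma$ is a Hermitian invertible matrix for every $s \in [0,1]$, so $\sig(U_s(T)\Gamma)$ is well-defined and, being a signature of an invertible Hermitian, is stable under small perturbations.

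I would then invoke continuity of $s \mapsto U_s(T)$, which follows from standard ODE theory (or directly from the Trotter expansion, since each factor $e^{-i(T/N)\,sH(kT/N)}$ depends continuously on $s$ uniformly in the finite product). Hence $s \mapsto \sig(U_s(T)\Gamma)$ is a continuous $\mathbb{Z}$-valued function on $[0,1]$, therefore constant. At $s=0$ we have $U_0(T)\Gamma = \Gamma$, and since $\Gamma$ is assumed to have $+1$ and $-1$ eigenspaces of equal dimension, $\sig(\Gamma) = 0$. Evaluating at $s=1$ then gives $\tfrac{1}{2}\sig(U(T)\Gamma) = 0$, as desired.

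The only potential obstacle is making the continuity statement fully rigorous, but in our finite-dimensional setting it is essentially routine: either cite well-posedness of the time-ordered exponential with respect to a parameter in the Hamiltonian, or argue directly from Equation~\ref{eq:TotterExpansion} that each finite-$N$ approximant is jointly continuous in $s$ and the convergence is uniform on $[0,1]$. Either way, the proof reduces to the same ``integer-valued invariant along a continuous path'' mechanism already used in the lemma preceding Theorem~\ref{thm:ChiralLogarithm}.
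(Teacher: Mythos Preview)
Your proposal is correct and uses the same underlying mechanism as the paper---an integer-valued signature along a continuous path of chirally symmetric unitaries starting at $I$---but you choose a genuinely different homotopy. The paper deforms by shrinking the time window: it considers the intermediate propagators $U(t,T-t)$ for $t\in[0,T/2]$, re-runs a Trotter computation to verify $\Gamma U(t,T-t)\Gamma=U(t,T-t)^{\dagger}$ for each $t$, and then lets $t\to T/2$ collapse the propagator to $I$. You instead deform by scaling the Hamiltonian, $H_s=sH$, and observe that chiral symmetry is linear in $H$, so the argument of \S\ref{sec:TimeEvoWithSym} applies to each $H_s$ without modification.

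Your route is arguably cleaner: it avoids reproving the symmetry of the propagator for a new family and reduces directly to the computation already carried out for Equation~\ref{eq:Chiral_simmetry_def}. The paper's route has the minor advantage that it never leaves the original dynamical system---all the unitaries $U(t,T-t)$ are actual propagators of the given $H(t)$---whereas your path passes through auxiliary systems. Either way, continuity of the Floquet operator in the deformation parameter is routine in finite dimensions, and both paths terminate at $\tfrac{1}{2}\sig(I\Gamma)=0$ using the balanced-eigenspace hypothesis on $\Gamma$.
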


\begin{proof}
For $t$ in $[0,T/2]$ we have
\[
U(t,T-t)=\lim_{N\rightarrow\infty}e^{-i\Delta H(t_{N})}\cdots e^{-i\Delta H(t_{2})}e^{-i\Delta H(t_{1})}
\]
where $\Delta=\frac{T-2t}{N}$ and $t_{j}=t+j\Delta$, as well as
\[
U(t,T-t)=\lim_{N\rightarrow\infty}e^{-i\Delta H(t_{N-1})}\cdots e^{-i\Delta H(t_{2})}e^{-i\Delta H(t_{0})}.
\]
Since 
\[
t_{N-j}=T-t_{j}
\]
we find
\[
\Gamma e^{-i\Delta H(t_{j})}\Gamma=e^{i\Delta H(-t_{j})}=e^{i\Delta H(T-t_{j})}=e^{i\Delta H(t_{N-j})}
\]
and so 
\begin{align*}
\Gamma U(t,T-t)\Gamma & =\lim_{N\rightarrow\infty}e^{i\Delta H(t_{1})}\cdots e^{i\Delta H(t_{N-1})}e^{i\Delta H(t_{N})}\\
 & =\lim_{N\rightarrow\infty}\left(e^{-i\Delta H(t_{N})}e^{-i\Delta H(t_{N-1})}\cdots e^{-i\Delta H(t_{1})}\right)^{\dagger}\\
 & =U(t,T-t)^{\dagger}.
\end{align*}
Thus $U(t,T-t)$ is a continuous path of unitaries with this symmetry.
Since
\[
\frac{1}{2}\sig\left(U(t,T-t)\Gamma\right)
\]
is constant, we see
\[
\frac{1}{2}\sig(U(T)\Gamma)=\frac{1}{2}\sig(I\Gamma)=0.
\]
\end{proof}
\begin{thm}
If the Floquet Hamiltonian $H(t)$ has chiral symmety $\Gamma H(t)\Gamma=-H(-t)$
for $\Gamma$ with eigenspaces for $-1$ and for $+1$ of the same
dimension, then there is a Hermitian matrix $H$ with 
\[
\Gamma H=-H\Gamma
\]
and
\[
e^{-iH}=U(T).
\]
Moreover, there is an orthonormal basis $\boldsymbol{b}_{1},\dots,\boldsymbol{b}_{N}$
and unit scalars $\lambda_{1},\dots,\lambda_{N}$ with 
$H\boldsymbol{b}_{j}=\lambda_j\boldsymbol{b}_{j}$,
for $j=1,\dots,N$, and
\[
\Gamma\boldsymbol{b}_{j}=\boldsymbol{b}_{\frac{N}{2}+j},
\]
\[
\overline{\lambda_{j}}=\lambda_{\frac{N}{2}+j}
\]
for $j=1,\dots,N/2$. 
\end{thm}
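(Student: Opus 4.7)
The strategy is to chain together the results already in place. The hypothesis on $H(t)$ is exactly what Lemma~\ref{lem:Chiral_implies_signature_zero} needs, so $\tfrac12\sig(U(T)\Gamma)=0$, and Theorem~\ref{thm:ChiralLogarithm} then delivers a Hermitian $H$ with $\Gamma H=-H\Gamma$ and $e^{-iH}=U(T)$. Both statements of the theorem except the paired-basis ``moreover'' clause are thus immediate, and the remaining work is to arrange the eigenbasis of $H$ (equivalently, of $U(T)$, since they commute and share eigenvectors) so that $\Gamma$ swaps its two halves.

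The first step in constructing that basis uses the anticommutation $\Gamma H=-H\Gamma$: if $H\boldsymbol{b}=\mu\boldsymbol{b}$ then $H\Gamma\boldsymbol{b}=-\mu\Gamma\boldsymbol{b}$, and since $\Gamma$ is a unitary involution it gives an isometric bijection between the eigenspaces $E_{\mu}$ and $E_{-\mu}$ of $H$. For each $\mu>0$ I would pick any orthonormal basis of $E_{\mu}$ and declare its $\Gamma$-image to be the basis of $E_{-\mu}$. Each resulting pair $(\boldsymbol{b},\Gamma\boldsymbol{b})$ is automatically orthonormal because its two members lie in eigenspaces for distinct eigenvalues, and the eigenvalues of $U(T)=e^{-iH}$ at these two vectors are $e^{-i\mu}$ and $e^{i\mu}$, a conjugate pair.

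The genuine obstacle is $\ker H$, where $\mu=0$ is self-paired. Since $\Gamma$ preserves $\ker H$ and squares to the identity, I would decompose $\ker H=K^{+}\oplus K^{-}$ into the $\pm 1$ eigenspaces of $\Gamma$; provided $\dim K^{+}=\dim K^{-}$, the construction used after Theorem~\ref{thm:Chiral_spectral_decomposition} turns orthonormal bases $\{\boldsymbol{e}_{j}\}\subset K^{+}$ and $\{\boldsymbol{f}_{j}\}\subset K^{-}$ into $\Gamma$-swapped pairs $\tfrac{1}{\sqrt{2}}(\boldsymbol{e}_{j}\pm\boldsymbol{f}_{j})$. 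To justify the kernel balance I would count dimensions: the $\Gamma$-bijections $E_{\mu}\leftrightarrow E_{-\mu}$ for $\mu\neq 0$ contribute equally to the $\pm 1$ subspaces of $\Gamma$, and since the total $\pm 1$ eigenspaces of $\Gamma$ are balanced by hypothesis, the shortfall must live entirely in $\ker H$ and must therefore vanish. After collecting these structured pairs over all $\mu\ge 0$, label the first members as $\boldsymbol{b}_{1},\dots,\boldsymbol{b}_{N/2}$ and set $\boldsymbol{b}_{N/2+j}=\Gamma\boldsymbol{b}_{j}$; the eigenvalues $\lambda_{j}$ of $U(T)$ at these positions are then unit scalars with $\lambda_{N/2+j}=\overline{\lambda_{j}}$, which is how I would read the final eigenvalue relation in the statement, since $H$ Hermitian forces its own eigenvalues to be real.
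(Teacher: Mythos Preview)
Your argument is correct and follows the paper's overall strategy: invoke Lemma~\ref{lem:Chiral_implies_signature_zero} to obtain $\tfrac{1}{2}\sig(U(T)\Gamma)=0$, then Theorem~\ref{thm:ChiralLogarithm} to produce $H$. The paper's own proof does exactly this and then, for the ``moreover'' clause, simply refers back to the proof of Theorem~\ref{thm:Chiral_spectral_decomposition} and the discussion following it.

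Where you differ is in the construction of the paired eigenbasis. The paper builds it from $U(T)$ using the symmetry $\Gamma U\Gamma=U^{\dagger}$: non-real eigenvalues of $U(T)$ come in conjugate pairs with $\Gamma$-related eigenvectors, while the two exceptional eigenspaces $E_{+1}$ and $E_{-1}$ each require a balance condition $\dim E_{\pm 1}^{+}=\dim E_{\pm 1}^{-}$, which the paper ties to the signature invariant via Lemma~\ref{lem:Sig_zero_balnance_Espaces}. You instead work directly with $H$ and the anticommutation $\Gamma H=-H\Gamma$. This is a genuine simplification: there is only one degenerate eigenspace ($\ker H$, where $\mu=-\mu$) rather than two, and your balance argument for it is a clean global dimension count rather than an appeal to the signature lemma. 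Both routes arrive at the same structured basis; yours is shorter and avoids re-invoking the index machinery. You are also right to flag the statement's phrase ``unit scalars'' for the $\lambda_j$: since $H$ is Hermitian its eigenvalues are real, and the conjugation relation $\overline{\lambda_j}=\lambda_{N/2+j}$ is most naturally read (as you do) for the eigenvalues of $U(T)=e^{-iH}$.
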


\begin{proof}
Lemma~\ref{lem:Chiral_implies_signature_zero} tells us that Theorem \ref{thm:ChiralLogarithm} applies, so there is a Hermitian
matrix $H$ so that $\Gamma H=-H\Gamma$ and $e^{-iH}=U(T)$. 

The method to find a structured orthonormal basis of eigenvectors for $U(T)$, and
hence for $H$, was described in the proof of Theorem~\ref{thm:Chiral_spectral_decomposition} and the discussion after the proof.

\end{proof}

\section{Symmetry preserving matrix square roots}
\label{section:matrix_sqrt}

\subsection{Matrix preserving functional calculus}

A  matrix $X \in \mathbb{C}^{n\times n}$ with no eigenvalues in $\mathbb{R}^-$ has principal square root $R=X^{1/2}$ defined by applying the principal branch of the scalar square root function via functional calculus.
In particular, for a unitary $U$ with $-1$ not in the spectrum, this is a unitary $V$ with spectrum within the open right half of the unit circle so that $V^2 = U$.  For unitary $V$ with $-1$ in the spectrum, we can look for one of many unitary matrices $V$ with $V^2 = U$ and spectrum in the closed right half of the unit circle.  We start looking at an algorithm that works for unitaries with $-1$ not in the spectrum that tends to preserve symmetries during the calculation.

First, a theorem that explains why it makes sense to search for a matrix square root
algorithm that preserves various symmetries.

\begin{thm}  \label{thm:RootSymmetry}
Assume $X \in \mathbb{C}^{n\times n}$ have no eigenvalues on $\mathbb{R}^{-}$ and
that $R$ is the principal square root of $X$.  Then the following are all true.
\begin{enumerate}
\item If $X$ is unitary then $R$ is unitary.
\item If $X$ is complex symmetric then $R$ is complex symmetric.
\item If $X$ is real then $R$ is real.
\item If $X$ is self-dual then $R$ is self-dual.
\item If $\Gamma X\Gamma=X^{\dagger}$ then $\Gamma R\Gamma=R^{\dagger}$.
\end{enumerate}
\end{thm}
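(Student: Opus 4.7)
The plan is to replicate, almost verbatim, the proof of Theorem~\ref{thm:FunCalcSymmetry}, with $\log(z)$ replaced by the principal branch of $\sqrt{z}$. That branch is holomorphic on $\mathbb{C}\setminus\mathbb{R}^{-}$ and sends its domain bijectively to the open right half-plane, so the principal square root admits the holomorphic-functional-calculus representation
\[
R = \frac{1}{2\pi i}\oint_\gamma \sqrt{z}\,(z-X)^{-1}\,dz,
\]
where $\gamma$ is a positively oriented contour enclosing $\sigma(X)$ and lying in $\mathbb{C}\setminus\mathbb{R}^{-}$. The analog of Theorem~\ref{thm:Higham_matrix_log} for square roots is the uniqueness statement that $R$ is the unique matrix with $R^{2}=X$ whose spectrum lies in the open right half-plane; this uniqueness is what replaces the strip condition in the logarithm proof.

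For (1), since $X$ is normal the holomorphic and continuous functional calculi coincide, and eigenvalues on the unit circle are sent by the principal branch to points still on the unit circle, so $R$ is unitary. For (2) and (4), the transpose and the dual operation $X^{\sharp}=-ZX^{\top}Z$ pass through the contour integral via the identities $(z-X^{\top})^{-1}=((z-X)^{-1})^{\top}$ and $(z-X^{\sharp})^{-1}=((z-X)^{-1})^{\sharp}$, which are exactly the identities used in Equations~\ref{TransposeLog} and \ref{DualLog}. Thus $\sqrt{X^{\top}}=R^{\top}$ and $\sqrt{X^{\sharp}}=R^{\sharp}$, and the hypotheses $X^{\top}=X$ or $X^{\sharp}=X$ immediately give $R^{\top}=R$ or $R^{\sharp}=R$. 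Part (5) is then word-for-word the argument of part (5) of Theorem~\ref{thm:FunCalcSymmetry}: conjugate inside the integral and use $\Gamma(z-X)^{-1}\Gamma=(z-\Gamma X\Gamma)^{-1}=(z-X^{\dagger})^{-1}$.

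The one spot that requires a small adjustment from the logarithm argument is (3), which is the analog of Equation~\ref{log_of_adjoint}. Here uniqueness does the work: $(R^{\dagger})^{2}=(R^{2})^{\dagger}=X^{\dagger}$, and since $\sigma(R^{\dagger})$ is the complex conjugate of $\sigma(R)$ and the open right half-plane is closed under conjugation, $R^{\dagger}$ also has spectrum in the open right half-plane. Therefore $R^{\dagger}=\sqrt{X^{\dagger}}$. Combining with the transpose identity applied to $X^{\dagger}=X^{\top}$ when $X$ is real yields $R^{\dagger}=\sqrt{X^{\dagger}}=\sqrt{X^{\top}}=R^{\top}$, so $R$ is real.

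The hard part is really nothing more than verifying the adjoint identity $\sqrt{X^{\dagger}}=R^{\dagger}$, and this is short once one notes that the defining property of the principal square root is stable under taking adjoints. Every structural symmetry in the list is then either a direct consequence of normality or can be moved past the contour integral by the same three elementary involutive identities. The only conceptual subtlety, as in Theorem~\ref{thm:FunCalcSymmetry}, is that an eigenvalue at $-1$ on the unit circle is disallowed by the hypothesis; this is precisely why the theorem does not extend to arbitrary unitaries, and motivates the boundary-case analysis that follows.
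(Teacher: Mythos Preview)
Your proof is correct, but it takes a different route from the paper. The paper's proof is a one-liner: it observes that $R=\exp\bigl(\tfrac{1}{2}\log(X)\bigr)$, invokes Theorem~\ref{thm:FunCalcSymmetry} to get the relevant symmetry on $\log(X)$, and then notes that the power-series definition of the exponential makes it transparent that each of the five symmetries passes from $\tfrac{1}{2}\log(X)$ to its exponential. You instead rerun the holomorphic-functional-calculus argument of Theorem~\ref{thm:FunCalcSymmetry} from scratch with $\sqrt{z}$ in place of $\log(z)$, supplying the adjoint identity $\sqrt{X^{\dagger}}=R^{\dagger}$ via the uniqueness characterization of the principal square root. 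Both approaches are sound; the paper's is shorter and makes clear that Theorem~\ref{thm:RootSymmetry} is a formal corollary of Theorem~\ref{thm:FunCalcSymmetry}, while yours is self-contained and highlights that the contour-integral mechanism works uniformly for any branch-cut function, not just the logarithm.
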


\begin{proof}
Since it is easy, using power series, to see how symmetries in a matrix lead to symmetries
in the exponential of that matrix, these are all easily proven via the formula
\[
R = \exp\left(\frac{1}{2}\log(X)\right)
\]
and Theorem~\ref{thm:FunCalcSymmetry}.
\end{proof}

There are many algorithms to compute a matrix square root \cite{Higham}. One algorithm has been noted to behave well for many
symmetries \cite{HighamEtAlSquareRootGroupPreserving} relies on an iteration
\begin{align}
Y_{k+1}&=\frac{1}{3}Y_k\left[I+8\left(I+3Z_kY_k\right)^{-1}\right],\\
Z_{k+1}&=\frac{1}{3}\left[I+8\left(I+3Z_kY_k\right)^{-1}\right]Z_k.
\label{rec_rel}
\end{align}

Given the initial conditions $Y_{0}=A$ and $Z_{0}=I$ this leads
to good convergence $Y_{k}\rightarrow A^{\frac{1}{2}}$ for $A$ with
spectrum avoiding $\mathbb{R}^{-}$, as is explained in  
\cite[Section 6]{HighamEtAlSquareRootGroupPreserving}.
The following result shows why, in theory at
least, this algorithm will return an approximate square root that
is unitary when the input is unitary, and has an additional symmetry
when the input has that additional symmetry.

\begin{thm}
\label{thm:iterations_symmetries}
Suppose $Y$ and $Z$ are matrices such that $I-3ZY$ is invertible,
and let
\begin{align*}
Y' & =\frac{1}{3}Y\left(I+8\left(I+3ZY\right)^{-1}\right),\\
Z' & =\frac{1}{3}\left(I+8\left(I+3ZY\right)^{-1}\right)Z.
\end{align*}
\begin{enumerate}
\item If $Y$ and $Z$ are unitary then $Y'$ and $Z'$ are unitary.
\item If $Y$ and $Z$ are complex symmetric then $Y'$ and $Z'$ are complex
symmetric.
\item If $Y$ and $Z$ are real then $Y'$ and $Z'$ are real.
\item If $Y$ and $Z$ are self-dual then $Y'$ and $Z'$ are self-dual.
\item If $\Gamma Y\Gamma=Y^{\dagger}$ and $\Gamma Z\Gamma=Z^{\dagger}$
then $\Gamma Y'\Gamma=Y'^{\dagger}$ and $\Gamma Z'\Gamma=Z'^{\dagger}$.
\end{enumerate}
\end{thm}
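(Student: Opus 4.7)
The plan is to factor the common ``update matrix''
\[
V \;=\; \tfrac{1}{3}\bigl(I+8W\bigr),\qquad W \;=\; (I+3ZY)^{-1},
\]
so that $Y' = YV$ and $Z' = VZ$. Viewing the iteration this way, parts (2)--(5) reduce to showing that the relevant symmetry operation moves $V$ past $Y$ (or $Z$) in a compatible manner, while part (1) reduces to showing that $V$ itself is unitary.

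For part (1), which I expect to be the main obstacle because it is the only claim requiring analytic rather than purely algebraic input, I would set $P := ZY$. When $Y$ and $Z$ are unitary, $P$ is unitary and hence normal, so the continuous functional calculus for $P$ is available. Simplifying gives $V = v(P)$ with $v(\lambda) = (3+\lambda)/(1+3\lambda)$, and a direct scalar computation shows $|v(\lambda)| = 1$ whenever $|\lambda| = 1$. Consequently $V$ is unitary, and $Y' = YV$ and $Z' = VZ$ are products of unitaries. This also disposes of any worry about invertibility of $I+3ZY$ in the unitary case, since $-1/3$ is not on the unit circle.

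For parts (2), (4), and (5), the key algebraic lemma is the ``swap'' identity
\[
Y(I+3ZY) \;=\; (I+3YZ)Y,
\qquad\text{equivalently}\qquad
Y(I+3ZY)^{-1} \;=\; (I+3YZ)^{-1}Y,
\]
with the analogous relation $(I+3ZY)^{-1}Z = Z(I+3YZ)^{-1}$. Each of the three involutions in play --- transpose, the dual $X \mapsto -ZX^\top Z$, and the conjugation $X \mapsto \Gamma X \Gamma$ composed with $\dagger$ --- is an antihomomorphism on products, so under the assumed hypothesis each one sends $I+3ZY$ to $I+3YZ$ (possibly with an entrywise conjugation). The swap identity is then exactly what is needed to reassemble the transformed $V$ on the correct side of the transformed $Y$, matching either $Y'$ or $(Y')^\dagger$ as the case demands; the $Z'$ computation is symmetric.

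Part (3) is immediate: all operations defining $Y'$ and $Z'$ are matrix addition, multiplication, and inversion, which preserve realness. The one item worth double-checking in the writeup is that every inversion we invoke really is permitted; since the theorem hypothesis is $I+3ZY$ invertible, and the identities above show that $I+3YZ$, $(I+3ZY)^\top$, $(I+3ZY)^\sharp$, and $\Gamma(I+3ZY)\Gamma$ are all obtained from $I+3ZY$ by invertibility-preserving maps, no additional assumption is needed.
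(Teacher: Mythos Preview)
Your proposal is correct and follows essentially the same route as the paper's proof: part (1) via the scalar rational function $v(\lambda)=(3+\lambda)/(1+3\lambda)$ mapping the unit circle to itself, and parts (2), (4), (5) via the swap identity $Yf(ZY)=f(YZ)Y$ (which the paper phrases as the functional-calculus fact $f(AB)A=Af(BA)$), with (3) trivial. Your explicit remark that $I+3YZ$ inherits invertibility from $I+3ZY$ is a nice touch that the paper leaves implicit.
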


\begin{proof}
(1) This was proven in \cite{HighamEtAlSquareRootGroupPreserving}.
Note that the scalar identity 
\begin{equation}
\frac{1}{3}\left(1+\frac{8}{1+3z}\right)=\frac{3+z}{1+3z} \label{rat_ident}
\end{equation}
shows that the function sends the unit circle to the unit circle,
so for any unitary matrix $U$ the matrix
\[
\frac{1}{3}\left(I+8\left(I+3U\right)^{-1}\right)
\]
is also unitary.

(2) Assuming $Y^{\top}=Y$ and $Z^{\top}=Z$ we have
\begin{align*}
Y'^{\top} & =\frac{1}{3}\left(I+8\left(\left(I+3ZY\right)^{-1}\right)^{\top}\right)Y\\
 & =\frac{1}{3}\left(I+8\left(I+3YZ\right)^{-1}\right)Y\\
 & =\frac{1}{3}Y\left(I+8\left(I+3ZY\right)^{-1}\right)\\
 & =Y',
\end{align*}
where the last step uses the standard fact about the holomorphic functional
calculus that $f(AB)A=Af(BA)$ so long as $f(AB)$ and $f(BA)$ make
sense. The proof for $Z'$ is similar.

(3) This is obvious.

(4) This is similar to the proof of (2).

(5) Assuming $\Gamma Y\Gamma=Y^{\dagger}$ and $\Gamma Z\Gamma=Z^{\dagger}$
we have 
\begin{align*}
\Gamma Y'\Gamma & =\frac{1}{3}\Gamma Y\Gamma\left(I+8\left(\left(I+3\Gamma Z\Gamma\Gamma Y\Gamma\right)^{-1}\right)\right)\\
 & =\frac{1}{3}Y^{\dagger}\left(I+8\left(I+3\left(Z^{\dagger}Y^{\dagger}\right)\right)^{-1}\right)\\
 & =\left(\frac{1}{3}\left(I+8\left(I+3\left(YZ\right)\right)^{-1}\right)Y\right)^{\dagger}\\
 & =\left(\frac{1}{3}Y\left(I+8\left(I+3\left(ZY\right)\right)^{-1}\right)\right)^{\dagger}\\
 & =Y'^{\dagger}
\end{align*}
 and the proof for $Z'$ is similar. 
\end{proof}

We now consider Algorithm~\ref{alg:theoretical_square_root}, which
is essentially that given in
\cite[Theorem 6.1.]{HighamEtAlSquareRootGroupPreserving}.
Theorem~\ref{thm:iterations_symmetries} shows that 
this iterative algorithm will respect the property of being
unitary while at the same time preserve various symmetries
of importance in physics.

In practice, it is not even possible to start Algorithm~\ref{alg:theoretical_square_root}
as it requires before it starts the condition $UU^\dagger = I$.  This is not likely
to hold in finite-precision arithmetic.  We need to deal with approximate unitaries and
matrices as well as confront approximate symmetries.

\begin{figure}[!tp]
\begin{algorithm}[H]
\begin{flushleft}
\textbf{Input}: unitary matrix $U$ with $-1\protect\notin\sigma(U)$.\\
\textbf{Output}: $V$ a unitary with $V^2\approx U$ with the same symmetry
as $U$. 
\end{flushleft}
\begin{algorithmic}
\STATE $Y\leftarrow U$ 
\STATE  $Z\leftarrow I$
\REPEAT
\STATE  $Y_{0}\leftarrow Y$
\STATE  $Z_{0}\leftarrow Z$
\STATE  $C\leftarrow(1/3)(I+8(I+Z_{0}R_{0})^{-1})$
\STATE  $Y\leftarrow Y_{0}C$
\STATE    $Z\leftarrow CZ_{0}$
\UNTIL{$Y \approx Y_{0}$}
\RETURN  $V=Y$
\caption{The algorithm from \cite[Section 6]{HighamEtAlSquareRootGroupPreserving},
  applied to a unitary that might have an additional symmetry.
  \label{alg:theoretical_square_root}}
\end{algorithmic}
\end{algorithm}
\end{figure}

\subsection{Enforcing symmetries}

We need to confront the fact that $U^\dagger U$ will almost never equal
the identity if we are working in floating point arithmetic.  It is difficult
to do a theoretical analysis on how the error in $Y'$ and $Z'$ being unitary
relates to the error in $Y$ and $Z$ being unitary, where these are defined as
in Theorem~\ref{thm:iterations_symmetries}.  However,  
numerical experimentation with 
\[
U=\left[\begin{array}{cc}
e^{i3.1415926} & 10^{-12}\\
0 & e^{-i3.1415926}
\end{array}\right]
\]
quickly reveals that this is going to be a problem.  Applying the iteration
as in Algorithm~\ref{alg:theoretical_square_root} shows that 
that the error in $Y_{j}$ being unitary grows about one order
of magnitude every two iterations.  One can run the supplemental file
\texttt{test\_generic\_root.m} to see this calculation.  In this situation, the generic
algorithm returns $V$ with 
\[
\|V^{2}-U\|\approx9\times10^{-16},
\]
which is great, except that 
\[
\|V^{\dagger}V-I\|\approx4\times10^{-7},
\]
which is not so great.

Some of the symmetries and relations considered here require effort
to make the algorithms match the theory. For example, when computing structured square roots, Theorem \ref{thm:iterations_symmetries} tells us that if $Y_{0}$ and $Z_{0}$ are symmetric unitaries then
\[
Y_{1}=\frac{1}{3}Y_{0}\left[I+8\left(I+3Z_{0}Y_{0}\right)^{-1}\right]
\]
 is again a symmetric unitary. In practice, even if the relations
\[
Y_{0}^{\top}=Y_{0},\ Z_{0}^{\top}=Z_{0},\ Y_{0}Y_{0}^{\dagger}=I\ Z_{0}{Z_{0}}^{\dagger}=I
\]
 hold exactly, numerical errors mean that we only have 
\[
Y_{1}^{\top}\approx Y_{1},\ Y_{1}{Y_{1}}^{\dagger}\approx I.
\]

Left unchecked, these errors will grow as the iteration continues. 
A fix that works well here is to adjust the intermediate matrices
to be closer to unitary and exactly satisfy other symmetries if
appropriate.

We can easily enforce symmetry. If $U^{\top}$ is close to $U$ we
just replace a matrix $U$ by 
\begin{equation} \label{eqn:enforce_complex_symmetric}
U'=\frac{1}{2}\left(U+U^{\top}\right).
\end{equation}

A similar averaging will enforce the symmetries such as $U^{\dagger}=\Gamma U\Gamma$. If we regard ``being
real'' as a symmetry, this is generally enforced by using real floating
point arithmetic instead of complex floating point arithmetic.

\begin{figure}[!tp]
\begin{algorithm}[H]
\begin{flushleft}
\textbf{Input}: matrix $U$ with $-1$ not close to $\sigma(U)$ and $U^{\dagger}U\approx I$ \\
\textbf{Output}: $V$ a matrix with $V^{\dagger}V\approx I$ and  $V^2 \approx U$ with the same symmetry
as $U$. 
\end{flushleft}
\begin{algorithmic}
\STATE $Y\leftarrow U$ 
\STATE  $Z\leftarrow I$
\REPEAT
\STATE  $Y_{0}\leftarrow Y$
\STATE  $Z_{0}\leftarrow Z$
\STATE  $C\leftarrow(1/3)(I+8(I+Z_{0}R_{0})^{-1})$
\STATE  $Y\leftarrow Y_{0}C$
\STATE    $Z\leftarrow CZ_{0}$
\STATE $Y \leftarrow(1/2)(Y+(Y^\dagger)^{-1})$
\STATE $Z \leftarrow(1/2)(Z+(Z^\dagger)^{-1})$
\STATE Enforce any needed symmetry on $Y$
\STATE Enforce any needed symmetry on $Z$
\UNTIL{$Y \approx Y_{0}$}
\RETURN  $V=Y$
\caption{Symmetry enforcing algorithm for computing a matrix square root.
  \label{alg:correcting_square_root}}
\end{algorithmic}
\end{algorithm}
\end{figure}

The relation $U^{\dagger}U=I$ is a bit different. To begin with, it is
highly unlikely that this literally holds. The best we can hope for
is something like 
$\left\Vert U^{\dagger}U-I\right\Vert \leq10^{-12}$
in a suitable norm. This error can grow as the iteration continues.
In practice, we can reduce an already small error to be even smaller
by replacing in intermediate matrix $V$ by
\[
\frac{1}{2}\left(V+\left(V^{\dagger}\right)^{-1}\right)
\]
which is one step in a basic implementation of a Newton type method
for computing the polar part of an invertible matrix \cite[\S3.2]{HighamPolarDecomposition}. In practice, we find this suffices to compensate for
the non-unitary nature of the numerical errors in the iterative algorithm
for computing a matrix square root of a unitary matrix.

The resulting algorithm is listed as Algorithm~\ref{alg:correcting_square_root}.
We have only tested this carefully for the case of $U$ being a generic unitary, complex symmetric unitary or a unitary matrix
with the Chiral symmetry.  It should adapt to all manner of symmetry,
so long as one has an averaging formula like
Equation~\ref{eqn:enforce_complex_symmetric} to enforce that symmetry.

We provide all the Matlab programs needed to recreate the
data in the figures as supplementary files  \cite{LoringVidesSuppl}.
A basic implementation of Algorithm~\ref{alg:correcting_square_root} 
is in supplementary file \texttt{sqrtu\_basic.m}.  This implementation
does not check the input in any way.  In particular, if called on a class
AIII unitary with nontrivial index it will fail.

\begin{figure}[!tp]
\includegraphics{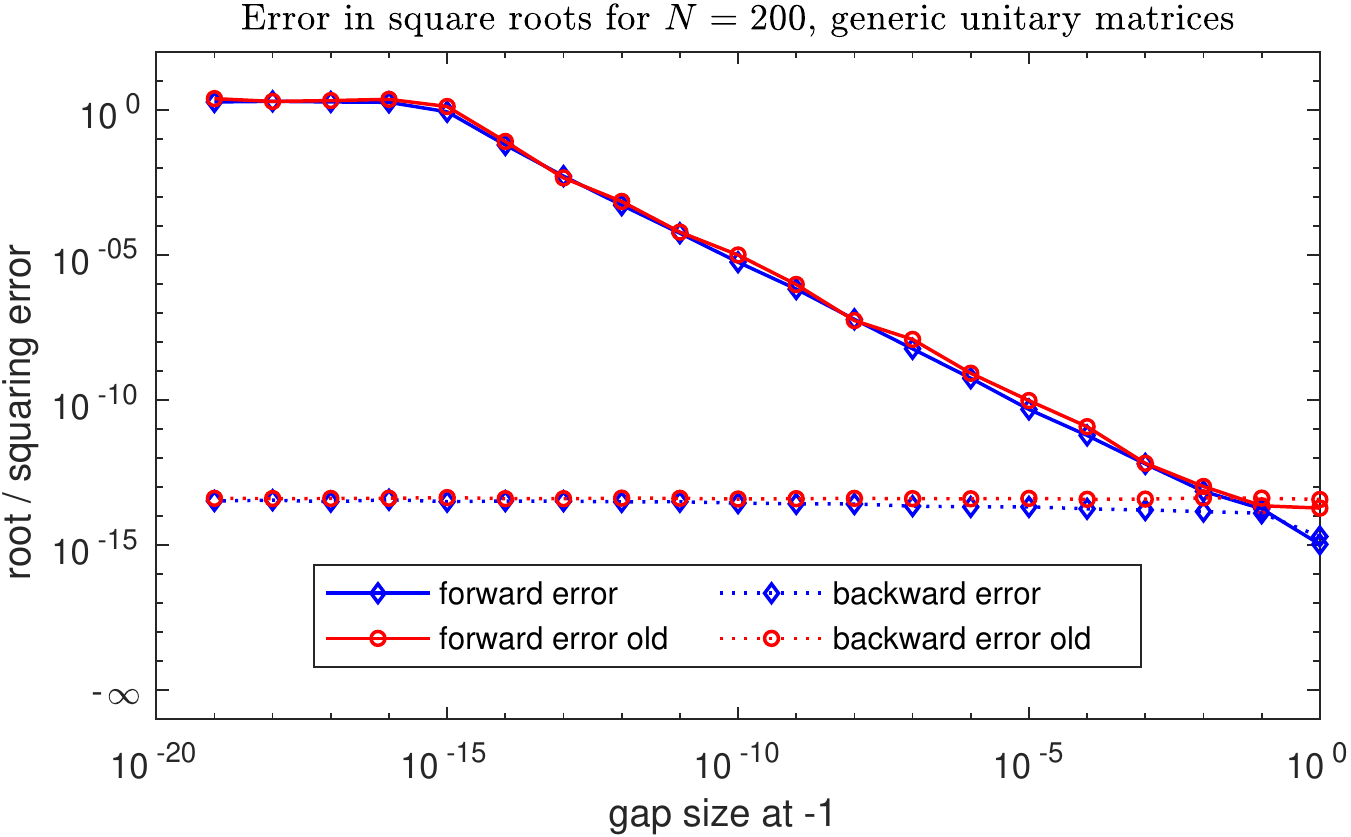}
\caption{Accuracy of the new algorithm is the solid blue line with diamonds.
Accuracy of standard (Matlab) algorithm is the solid red with circles.
Error in square of the root versus square of the original, the dotted
blue line for the new algorithm, the dotted red line with diamonds
for the standard algorithm. This is for generic complex unitary matrices,
with a gap at $-1$ in the spectrum of variable sizes. The test matrices
here are with $N=200$. 
\label{fig:Squaring-a-unitaryI}}
\end{figure}

\begin{figure}[!tp]
\includegraphics[clip]{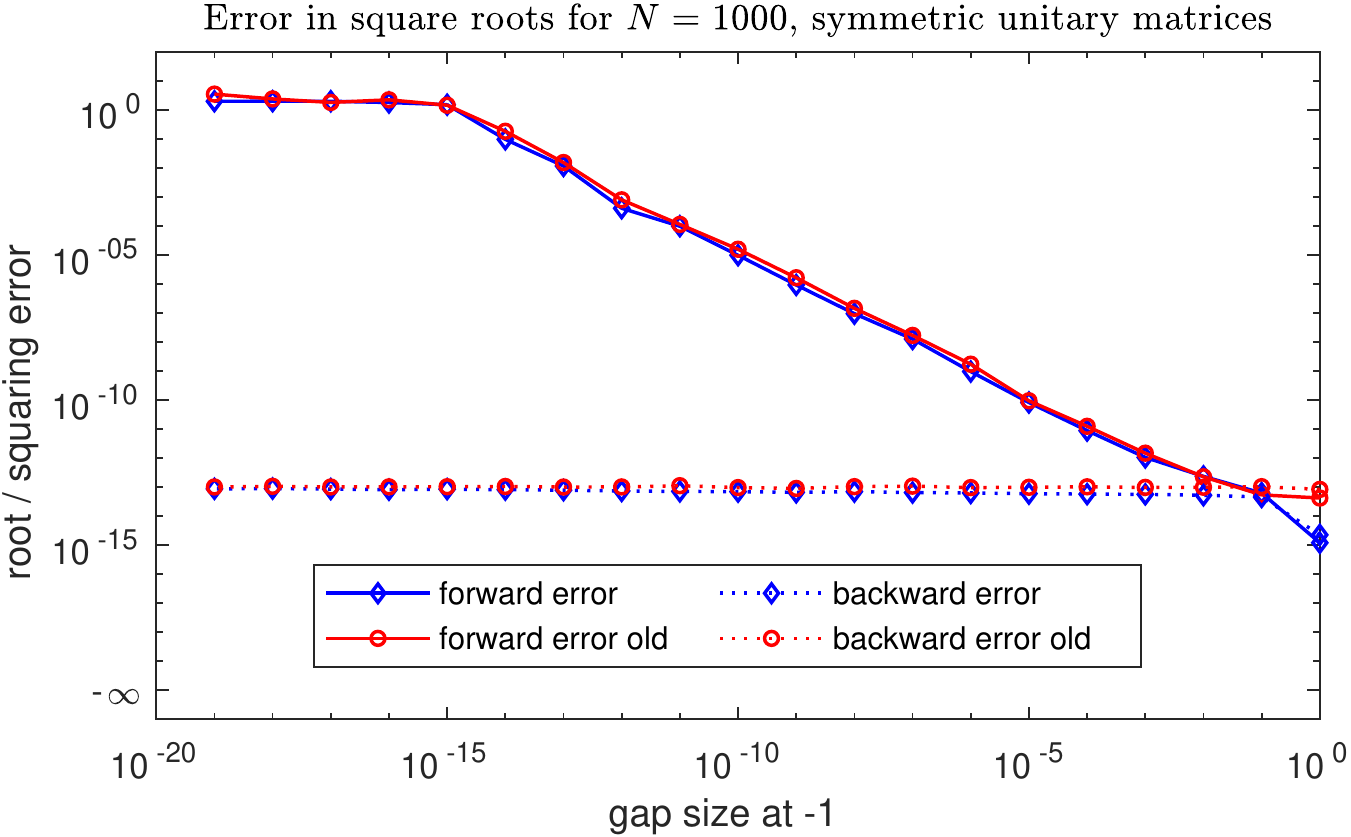}
\caption{As in Figure~\ref{fig:Squaring-a-unitaryI} but now for symmetric
unitary matrices, with a gap at $-1$ in the spectrum of variable
sizes and with $N=1000$. 
\label{fig:fig:Squaring-a-unitaryII}}
\end{figure}

\subsection{Forward and Backward stability}
\label{subsec:AccuracyOfRoot}

We compare forward and backward stability of the new algorithm to
the stability of a standard matrix square root algorithm, \texttt{sqrtm}
in Matlab. Since the standard branch of the logarithm is discontinuous
at $-1$ we expect forward stability to lessen as the gap at $-1$
closes.

Figure~\ref{fig:Squaring-a-unitaryI} and Figure~\ref{fig:fig:Squaring-a-unitaryII}
show the average, computed across ten test unitaries, of forward and
backward error.  One works with generic  unitary matrices, the other
with symmetric unitary matrices.  The matrix size and symmetry 
class have little effect here.  
To make the task a bit more complicated, the matrices
we selected have four eigenvalues at the prescribed arc distance
to $-1$.  The square root of the matrix was formed first, then
that was squared to be the input to the algorithms.  

The takeaway from these tests are that our algorithm is about as good in forward and backward stability as a well-established algorithm.  The
improvement is in the output being unitary, and having other desired symmetries, when the input  is unitary.

\subsection{Preservation of symmetries}

\begin{figure}[!tp]
\includegraphics[clip]{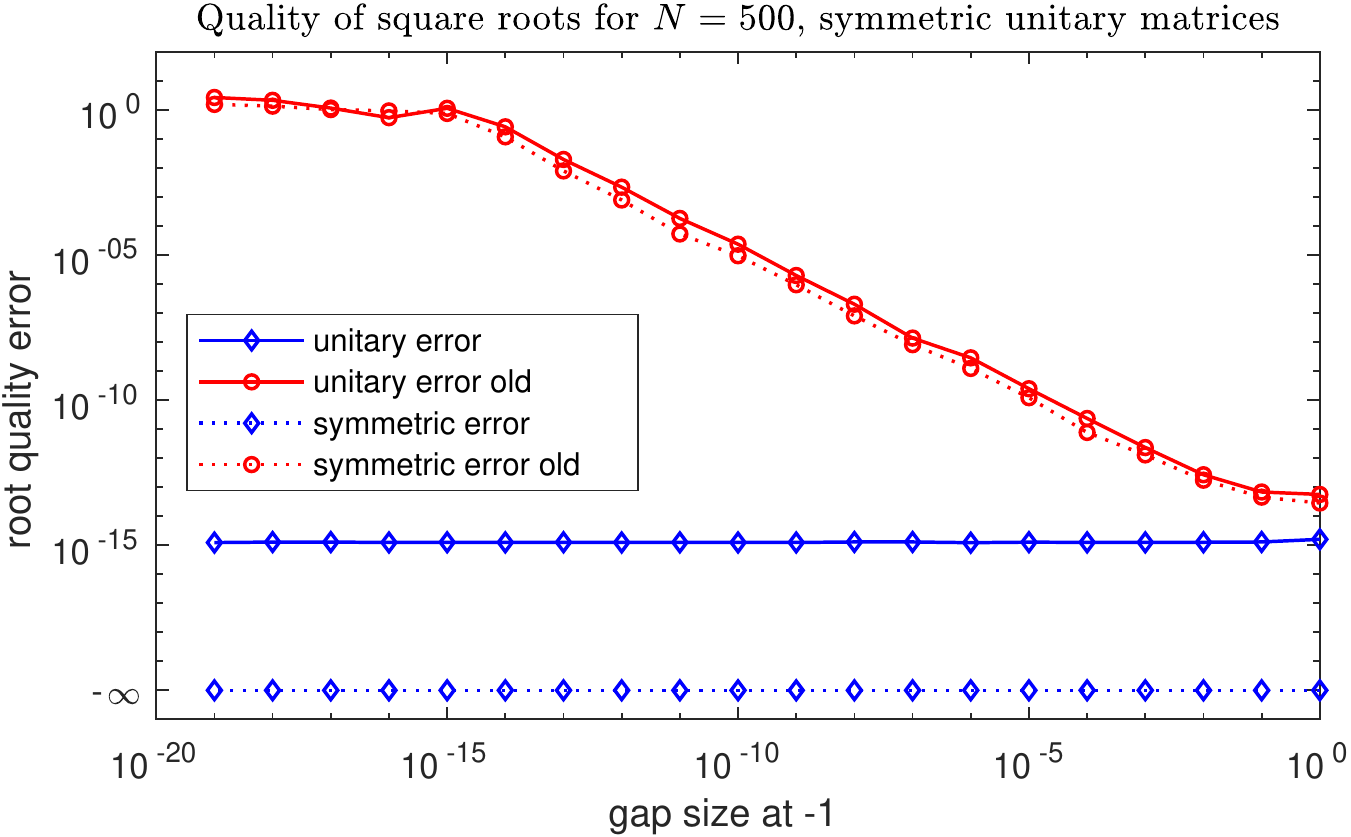}
\caption{For $N=500$ and symmetric unitary matrices. Blue with diamonds indicates
the new algorithm, red with circles indicates a standard algorithm
from Matlab. Solid line indicates error in being unitary. The dotted
lines indicate the error in being symmetric (0 is plotted as $5\times10^{-20}$
to make it visible in a log-log plot. 
\label{fig:fig:Quality_sqr_rt_I}}
\end{figure}

\begin{figure}[!tp]
\includegraphics[clip]{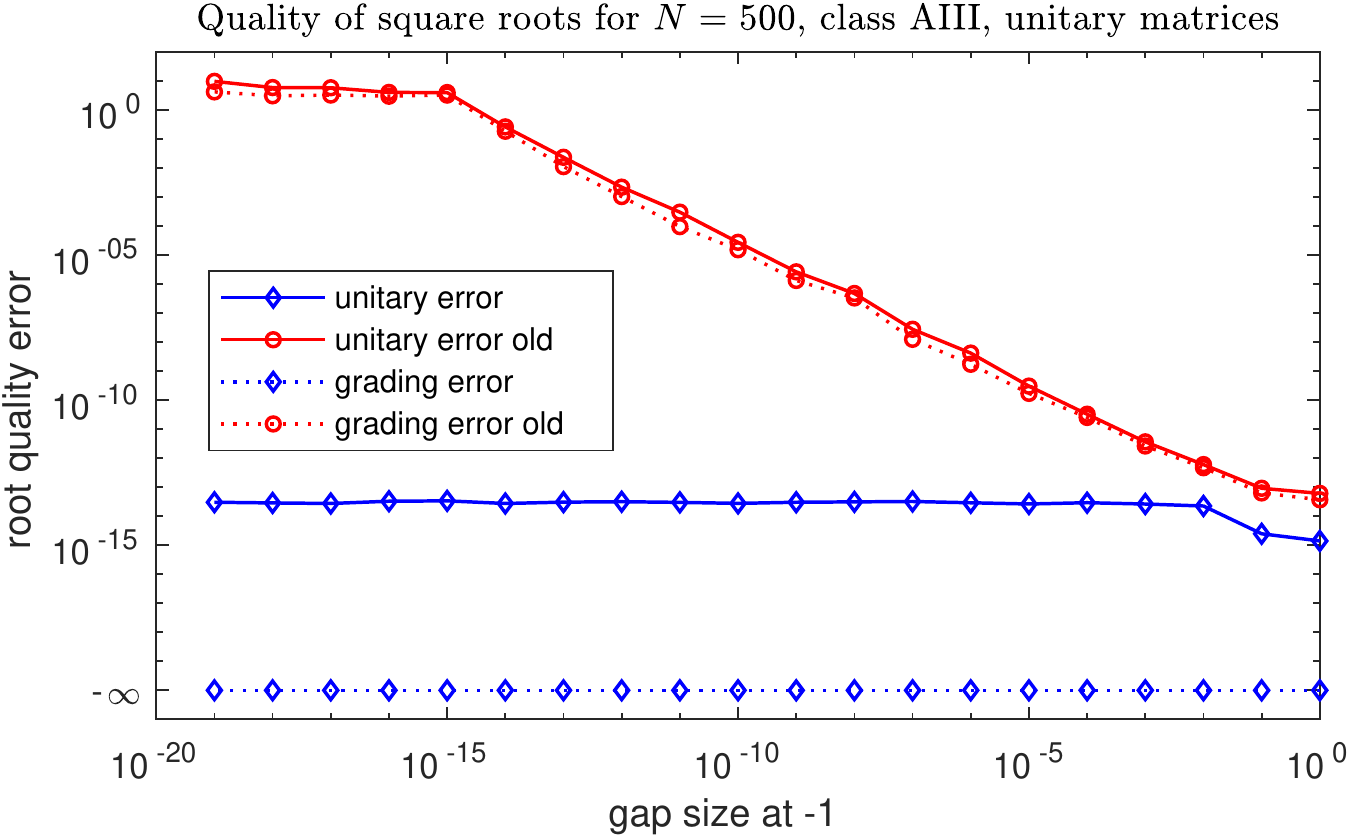}
\caption{Same as in Figure~\ref{fig:fig:Quality_sqr_rt_I}, but for $N=500$
and AIII symmetric unitary matrices. The dotted lines indicate the
error in the relation $\Gamma U\Gamma=U^{\dagger}$. 
\label{fig:fig:Quality_sqr_rt_II}}
\end{figure}

\begin{figure}[!tp]
\includegraphics[clip]{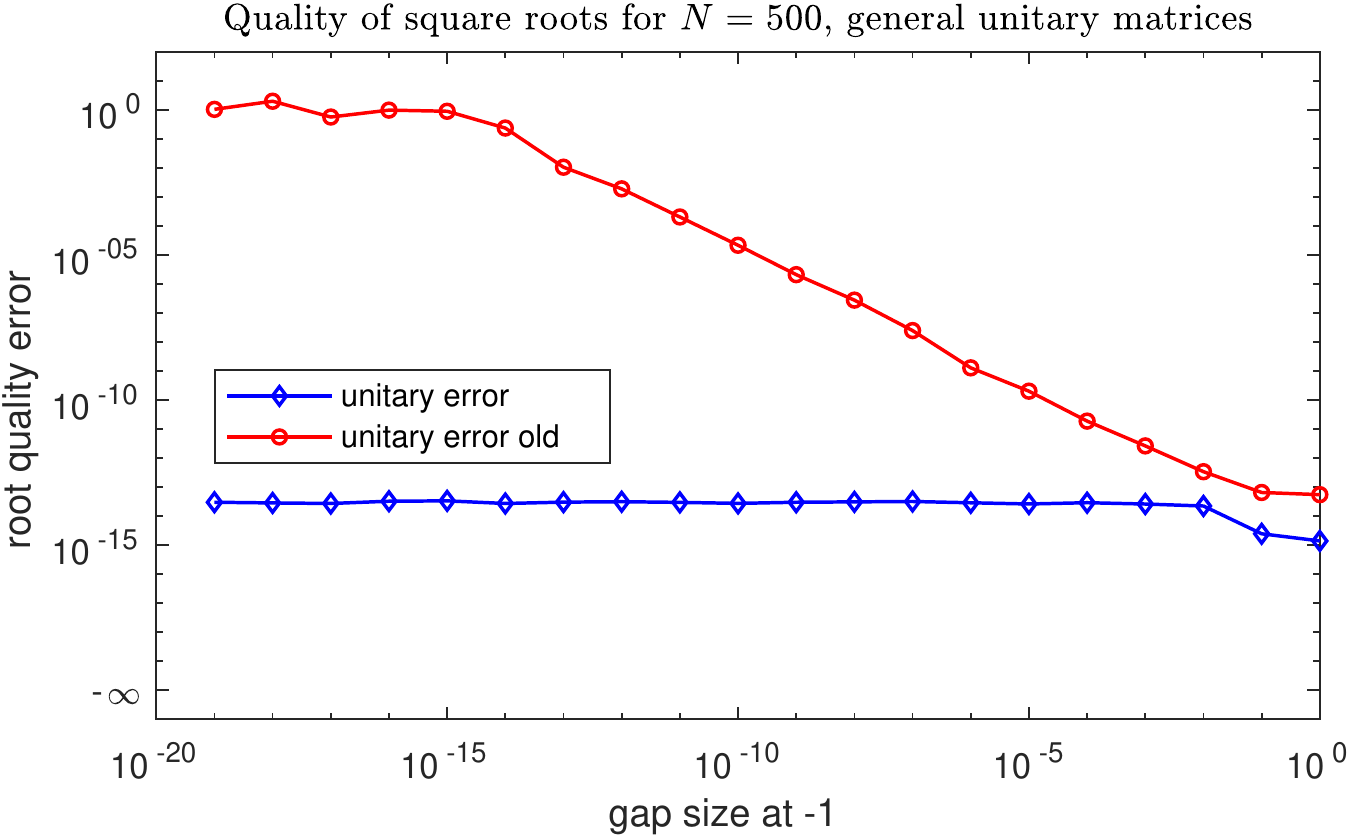}
\caption{Same as in Figure~\ref{fig:fig:Quality_sqr_rt_I}, but for $N=500$
and general complex unitary matrices. There is no second relation
to check. 
\label{fig:fig:Quality_sqr_rt_III}}
\end{figure}

We want our square roots to be unitary, and perhaps approximately
satisfy an additional symmetry.  In Figures~\ref{fig:fig:Quality_sqr_rt_I}-\ref{fig:fig:Quality_sqr_rt_III} we see a huge improvement in
our algorithm relative the generic algorithm. Some of the errors
in the new algorithm are zero, which get plotted as $5\times10^{-20}$
to make them show up on the log-log plot.

One might consider just taking the output $V$ of the standard algorithm and enforce on that
the extra symmetries and then computing the polar decomposition. In
cases were $\left\Vert VV^{\dagger}-I\right\Vert \approx10$ (happens
frequently) it may be that $V$ is singular and there is not a well
defined unitary polar part. This approach seems impossible.

\subsection{Nudging away from the fixed point with disorder}

We will present  in \S\ref{subsec:AccuracyOfRoot} data on the accuracy of Algorithm~\ref{alg:correcting_square_root}.  The upshot is that
it  is  very accurate, even if the spectrum of $U$ includes a point very 
close to $-1$.
  
In theory, if the input unitary $U$ has an eigenvalue at, or extremely close
to $-1$, then Algorithm~\ref{alg:correcting_square_root} should fail to
converge.  This is a rare problem, in practice, unless there is an actual
obstruction as in the chiral case.  It seems that round-off errors are
enough to have the effect of nudging eigenvalues away from $-1$.
Of course, if the index in Equation~\ref{eqn:AIII_index} does not vanish,
then there will be always at least one eigenvalue stuck at $-1$.  
This ``topological  protection'' will mean no structured square root exists,
so the algorithm was expected fail in this situation.

It should be easy to program so as to avoid even the rare case where
numerical errors are not enough to make the eigenvalue at $-1$ behave
as if it is not really at $-1$.  After a number of iterations in 
Algorithm~\ref{alg:correcting_square_root} one could add a random
matrix to $R_0$ that is small and mostly living in the eigenspaces
for $R_0$ near $-1$. Alternately, one could just re-run
Algorithm~\ref{alg:correcting_square_root} with a unitary that is created as the unitary close to small random perturbation of the original unitary.

\section{Symmetry preserving matrix logarithms}

\label{section:matrix_log}

\subsection{Inverse scaling and squaring}

In this section we will build on the ideas presented in \cite[\S2.3.]{arslan2017functions}. We will adapt the structured inverse scaling and squaring method proposed in \cite{arslan2017functions} to derive an approximation method of $\log(U)$ for any symmetric unitary matrix $U\in \mathbb{C}^{n\times n}$. 

Given a unitary matrix $U$, perhaps with an additional symmetry, the structured approximation method proposed in this section will be obtained by 	taking repeated 
square roots via Algorithm~\ref{alg:correcting_square_root}, followed by
applying the $[n/n]$ Pad\'e approximant $r_n$ of the function $\log(1+x)$ for a
modest sized $n$.  

In order to estimate the approximation error of the matrix logarithm computation one can apply \cite[Thm. 11.6]{Higham}.  This states that for any subordinate matrix norm,
\[
\|r_n(X)-\log(I+X)\|\leq |r_n(-\|X\|)-\log(1-\|X\|)|. 
\]
 given any matrix $X$ with $\|X\|<1$.

After taking square root five times of a unitary $U$, we can expect a
matrix $V$
with spectrum on the arc between $e^{\pm \pi i/32}$.  Assuming some
numerical
errors, we still may assume the spectrum of $V$ will be within $0.1$ of $1$.
A numerical plot of
\[
x\mapsto r_7(-x) - \log(1-x)
\]
over the interval $[-0.1, 0.1]$  shows that the error, in the spectral
norm, of
applying $r_7(\lambda)$ to $V-I$ instead of $\log(1+\lambda)$ will be
less than
$1.3\times 10^{-16}$.

\begin{figure}[!tp]
\begin{algorithm}[H]
\begin{flushleft}
\textbf{Input}: matrix $U$ with $-1$ not close to $\sigma(U)$ and $U^{\dagger}U\approx I$ \\
\textbf{Output}: $H$ a matrix with $H^{\dagger} = -H $ and  $\exp(H) \approx U $ with the same symmetry
as $U$. 
\end{flushleft}
\begin{algorithmic}
\STATE $R\leftarrow U^{1/32}$  using Algorithm \ref{alg:correcting_square_root} five times.
\STATE  $H \leftarrow r_7(R-I)$
\STATE $ H \leftarrow \frac{1}{2}(H - H^\dagger)$
\STATE $ H \leftarrow 32H$
\STATE Enforce any needed symmetry on $H$
\caption{Symmetry enforcing algorithm for computing a matrix logarithm.
  \label{alg:log_u_basic}}
\end{algorithmic}
\end{algorithm}
\end{figure}

Our algorithm is then to apply our symmetry preserving square root algorithm
five times, use $r_7$ to get a log of that, and rescale.  One can vary this
and take $k$ root and use $r_n$ but one cannot increase both $n$ and $k$
by much without introducing numerical errors that overwhelm the increase
in accuracy in the Pad\'e approximant.  We shall see that the resulting logarithm
performs about as well as can be expected when using double floating point
arithmetic.

Our algorithm for structured logarithms is summarized as
Algorithm~\ref{alg:log_u_basic}.  This is implemented in the
supplementary file \texttt{logsu\_basic.m} in \cite{LoringVidesSuppl}.

\subsection{Forward and backward stability}

\begin{figure}[!tp]
\includegraphics{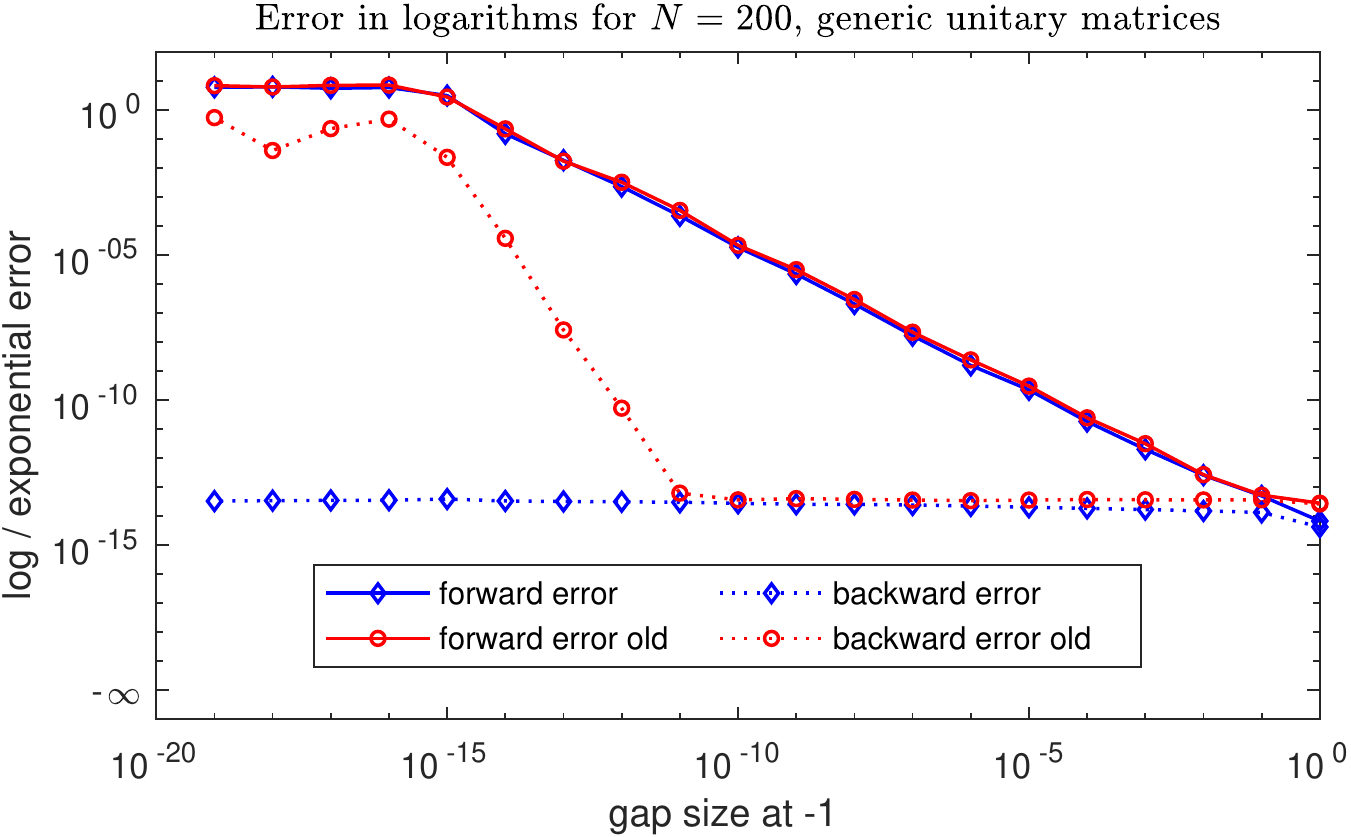}
\caption{Comparing foward and backward stability of 
Algorithm~\ref{alg:log_u_basic} with the built in Matlab
function \texttt{logm}.  This is for test unitaries constructed
as for the root test, with no additional symmetry for
unitary matrices with $N=200$.
\label{fig:Accuracy_log_I}}
\end{figure}

\begin{figure}
\includegraphics[clip]{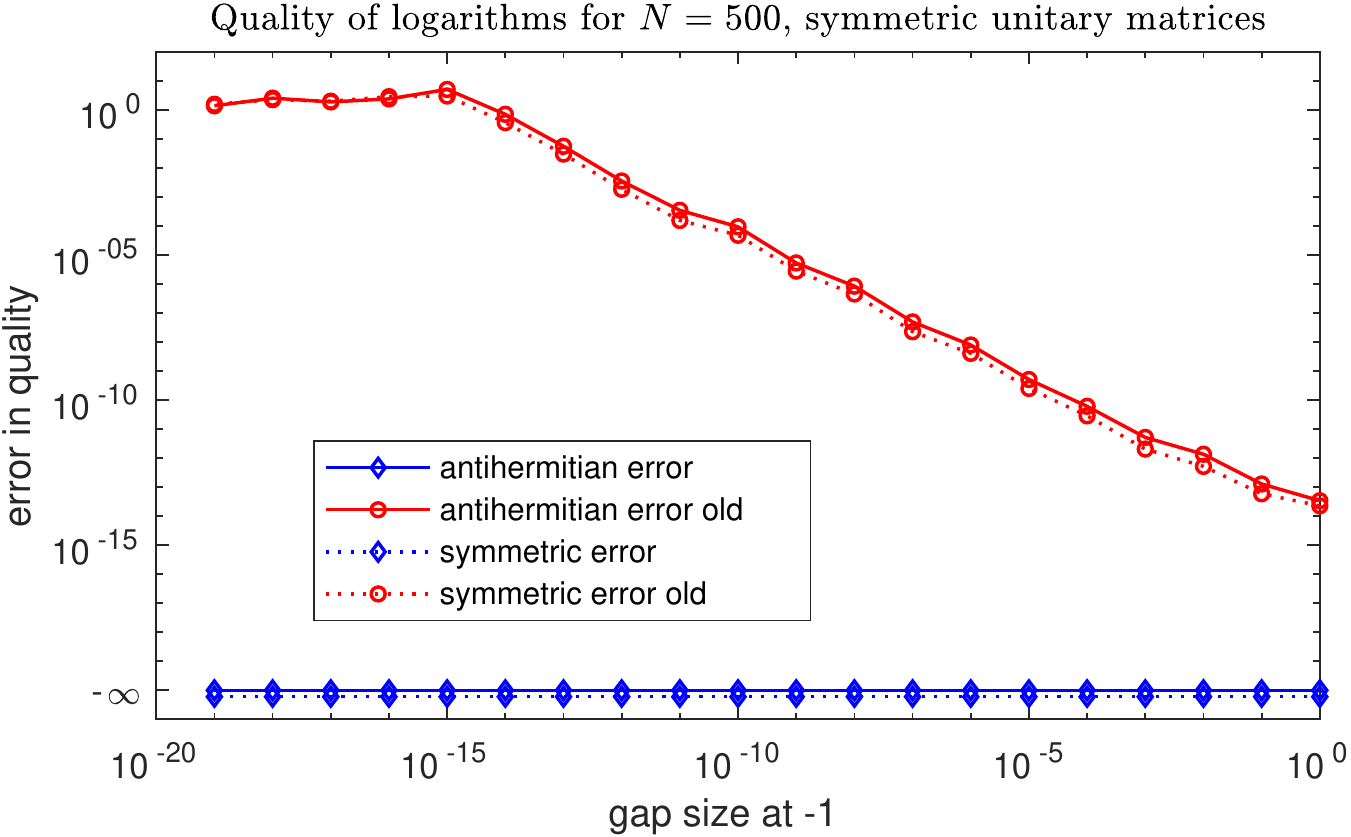}
\caption{For $N=500$ and symmetric unitary matrices. Blue with diamonds indicates
the new algorithm, red with circles indicates a standard algorithm
from Matlab. Solid line indicates error in being antihermitian. The
dotted lines indicate the error in being symmetric (0 is plotted as
$5\times10^{-20}$ to make it visible in a log-log plot. 
\label{fig:Quality_log_I}}
\end{figure}

\begin{figure}[!tp]
\includegraphics[clip]{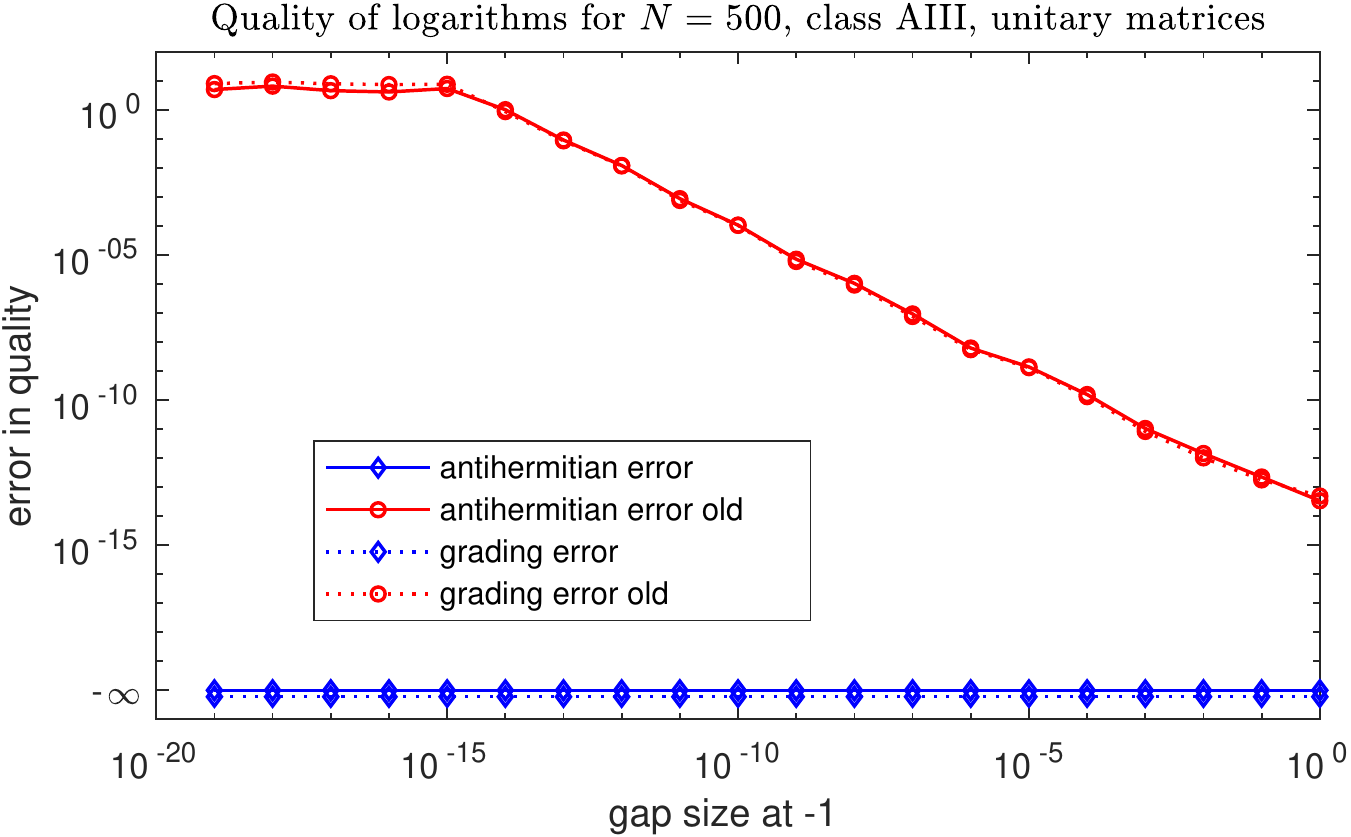}
\caption{Same as in Figure~\ref{fig:Quality_log_I}, but for $N=500$ and
AIII symmetric unitary matrices. The dotted lines indicate the error
in the relation $\Gamma H\Gamma=-H$ for $H$ the approximation to
$\log(U)$. 
\label{fig:Quality_log_II}}
\end{figure}

\begin{figure}[!tp]
\includegraphics[clip]{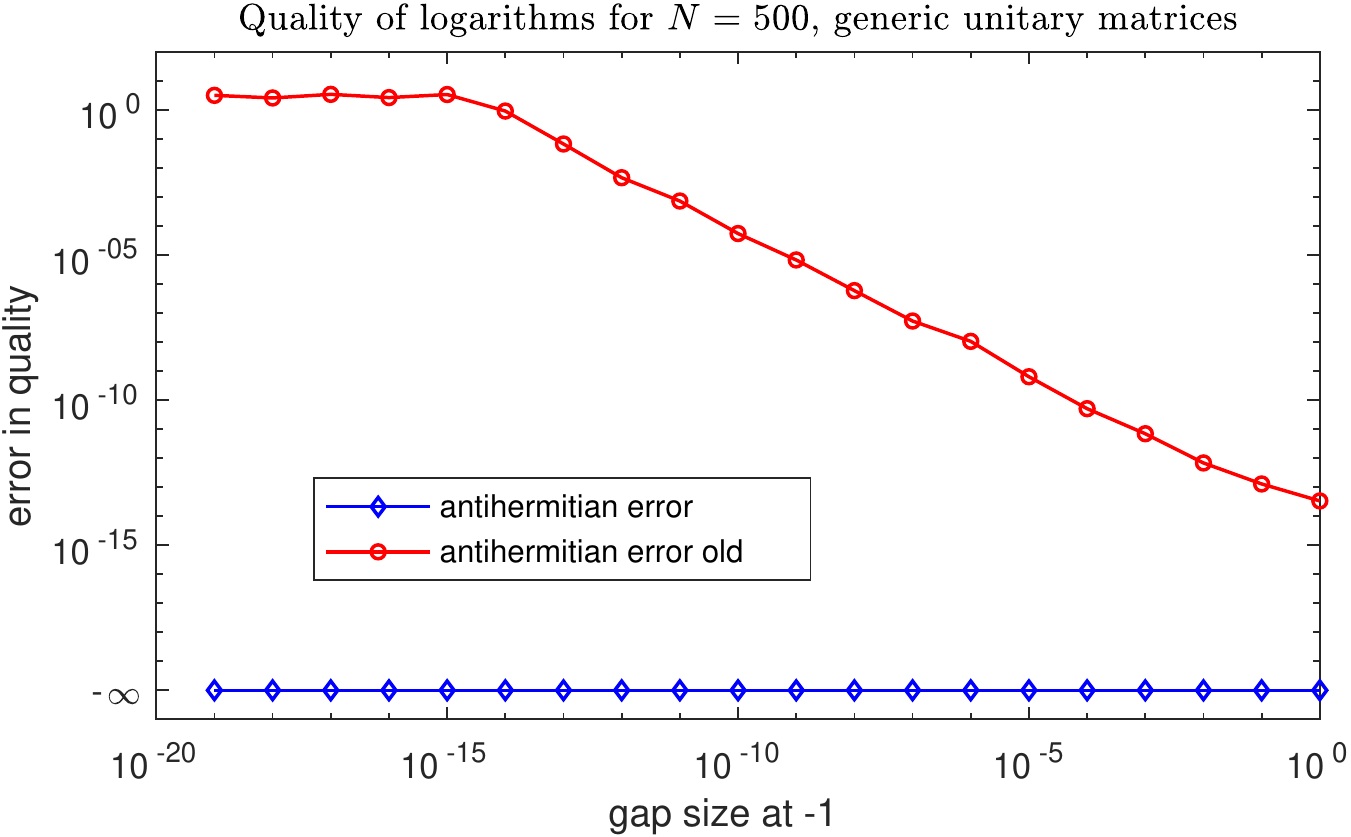}
\caption{Same as in Figure~\ref{fig:Quality_log_I}, but for $N=500$ and
general complex unitary matrices. There is no second relation to check.
\label{fig:Quality_log_III}}
\end{figure}

After the first square root is calculated, the rest of the algorithm involves
computing functional calculus applied to matrices whose spectrum stays
well away from any point of discontinuity.   The numerical study we
preformed  on Algorithm~\ref{alg:log_u_basic} gives data that looks a lot
like the data on the accuracy of Algorithm~\ref{alg:correcting_square_root}
so we omit most of it.  See Figure~\ref{fig:Accuracy_log_I} for a some
of the data.  Again, if we ignore the need for the answer being unitary, then
the new algorithm is roughly comparable to a standard algorithm, although
it does improve on the standard algorithm on backward error when the
gap at $-1$ is small.

\subsection{Preservation of symmetry}

We want our logarithms to be anti-Hermitian, and perhaps approximately
satisfy an additional symmetry. Here we see a huge improvement in
our algorithm relative the the generic algorithm. Some of the errors
in the new algorithm are zero, which get plotted as $5\times10^{-20}$
to make them show up on the log-log plot.  See Figures~\ref{fig:Quality_log_I}-\ref{fig:Quality_log_III}.

One might just take the output $V$ of the standard algorithm, enforce
the extra symmetries and then compute the polar decomposition. In
cases were $\left\Vert VV^{\dagger}-I\right\Vert \approx10$ (happens
frequently) it may be that $V$ is singular and there is not a well
defined unitary polar part. This approach seems impossible.

\section{Symmetry preserving matrix diagonalization}
\label{sec:MatrixDiagAlg}

\subsection{Diagonalize a unitary via its log}

In this section we will consider a symmetry preserving matrix diagonalization method, for unitary matrices with time reversal and Chiral symmetry.  For any
unitary $Q$ we have
\[
 Qe^U Q^\dagger =  e^{QU Q^\dagger},
\]
so the theory is telling us that if we do a good job finding a logarithm
$L$ of a unitary matrix $U$ then we can diagonalize $U$ by diagonalizing
$L$.  We can also diagonalize $H=-iL$.

The advantage of working with $H$ where $e^{iH}=U$ is that $H$ will be
Hermitian, and there are many available algorithms for finding an
\emph{orthogonal} basis of eigenvectors of Hermitian matrices.
Unfortunately, there are not easily available algorithms available for a
 structured diagonalization of a Hermitian matrix with the a added symmetries
corresponding to some of the Atland-Zirnbauer symmetry classes
\cite{altland1997nonstandard}.
A very adaptable method of diagonalization is the Jocobi method.
Since this applies to all normal matrices \cite{Wilkinson_Normal_Jacobi},
one might just as well compute directly on $U$ so we do not discuss this
method further.

In the three symmetry classes we have focused on, there are
rather standard methods to use on $H$.  In the general complex
case $H$ is just complex and Hermitian.  In the complex symmetric
case for $U$, we find $H$ to be real and symmetric.  In both cases,
there are very standard algorithms to use.  

When $U$ is in class
AIII, then we find $H$ to be odd, in that $\Gamma H = -H \Gamma$, as
well as Hermitian.  Thus
\[
H=\left[\begin{array}{cc}
0 & A\\
A^{\dagger} & 0
\end{array}\right]
\]
for a matrix $A$.
There is a well known connection between the
eigenvalues of $H$ and the sigular values of $A$. 
See \cite[\S 8.6.1]{GolubVanLoanMatrixComp}, for example.
For the current setting, we know $A$ is square.  If
$A=UDV^{\dagger}$ for $D$ diagonal and positive and $U$ and $V$
unitary, then
\[
\frac{1}{\sqrt{2}}\left[\begin{array}{cc}
U & U\\
-V & V
\end{array}\right]\left[\begin{array}{cc}
-D & 0\\
0 & D
\end{array}\right]
\left(\frac{1}{\sqrt{2}}
\left[\begin{array}{cc}
U & U\\
-V & V
\end{array}\right]
\right)^{\dagger}=\left[\begin{array}{cc}
0 & A\\
A^{\dagger} & 0
\end{array}\right] .
\]

Thus for $H$ given that is Hermitian and odd,
any algorithm that computes the full SVD of the top-right corner of
$H$ will give also the a structured diagonalization of $H$.  Specifically,
$\Gamma$ times the $j$th column of the unitary
\[
\frac{1}{\sqrt{2}}\left[\begin{array}{cc}
U & U\\
-V & V
\end{array}\right]
\]
equals the column in position $j+N/2$ so the orthogonal basis of eigenvectors
occurs in the desired sort of pairs.

\begin{figure}[!tp]
\begin{algorithm}[H]
\begin{flushleft}
\textbf{Input}: matrix $U$ with $-1$ not close to $\sigma(U)$ and $U^{\dagger}U\approx I$ \\
\textbf{Output}: $Q$ and $D$ with $Q$  structured unitary and $D$ unitary and diagonal, such that $U=QDQ^\dagger$. 
\end{flushleft}
\begin{algorithmic}
\STATE Compute $L$ with $e^H=U$, with $L$ anti-Hermitian and with
needed extra symmetries. 
\STATE $H \leftarrow -i H$
\STATE Compute $Q$ and $E$ with $H=QEQ^\dagger$, with $E$ real symmetric and diagonal, and with $Q$ a unitary with appropriate structure. 
\STATE $D\leftarrow e^{iE}$
\RETURN  $Q,D$
\caption{Symmetry enforcing algorithm for diagonalizing a unitary matrix.
  \label{alg:diagonalize_U}}
\end{algorithmic}
\end{algorithm}
\end{figure}

Our algorithm based on these ideas is summarized as
Algorithm~\ref{alg:diagonalize_U}.
 This is implemented in the
supplementary file \texttt{struct\_unitary\_diag.m} in \cite{LoringVidesSuppl}.

\subsection{Testing the diagonalization of unitaries}

We look at the quality of the diagonalization found by
Algorithm~\ref{alg:log_u_basic} on random unitaries in
three symmetry classes.  We compare this to
applying the standard Matlab eigensolver to each unitary
matrix.  As that algoroithm is designed to work in nonnormal
matrices, it is not surprising it does a terrible job of
finding orthogonal eigenvectors.
Our results are summarized in Figures~\ref{fig:Quality_diag-I}-\ref{fig:Quality_diag-III}.

\begin{figure}
\includegraphics[clip]{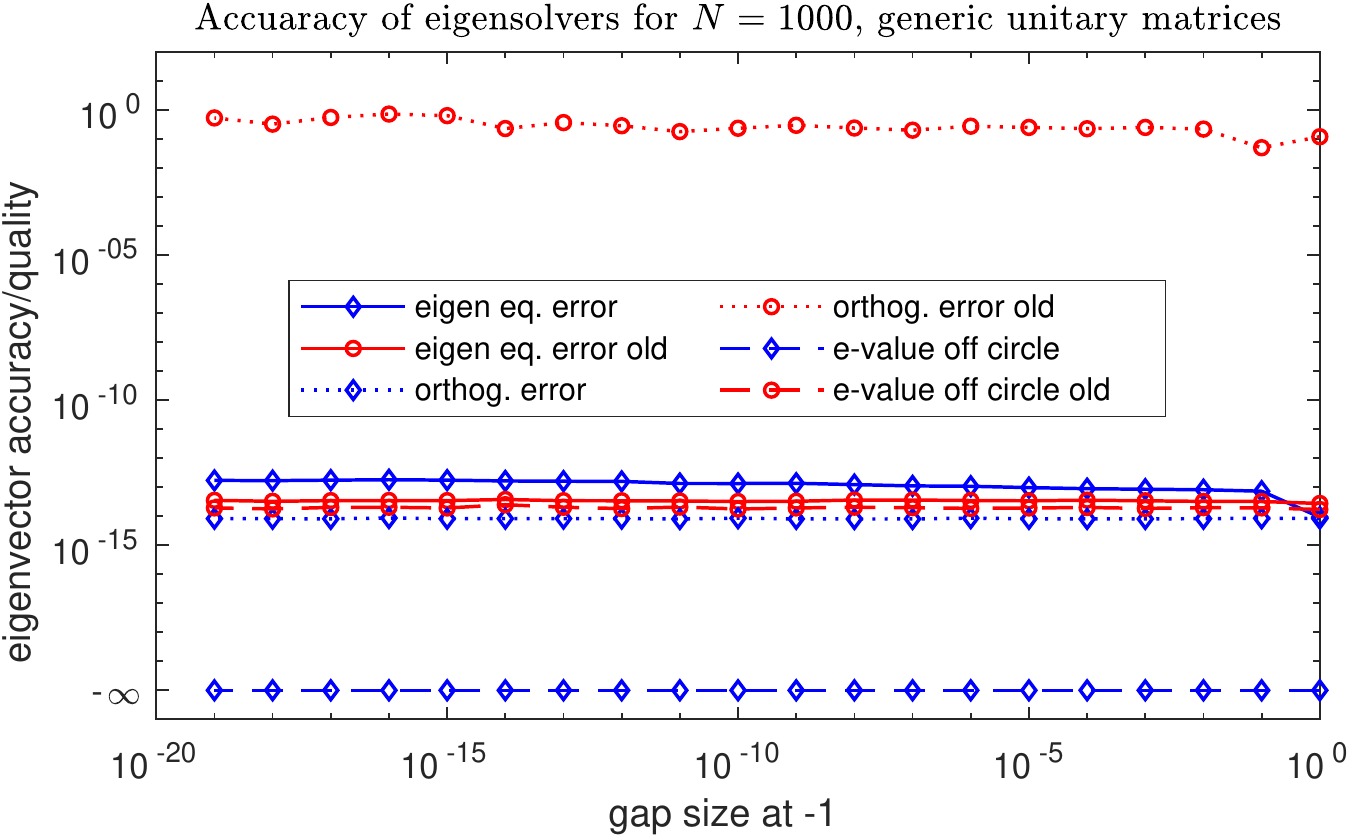}
\caption{Looking at the new diagonalization compared to a generic matrix diagonalization.
Diamond markers are the new algorithm, circle from the the generic
Matlab eigensolver. Solid is accuracy of $U\boldsymbol{v}=\lambda\boldsymbol{v}$.
Dotted is distance from the basis being orthogonal. Dashed is for
distance eigenvalues computed are away from the uni circle. This is
for generic complex unitary matrices, with $N=1000$. 
\label{fig:Quality_diag-I}}
\end{figure}

\begin{figure}
\includegraphics[clip]{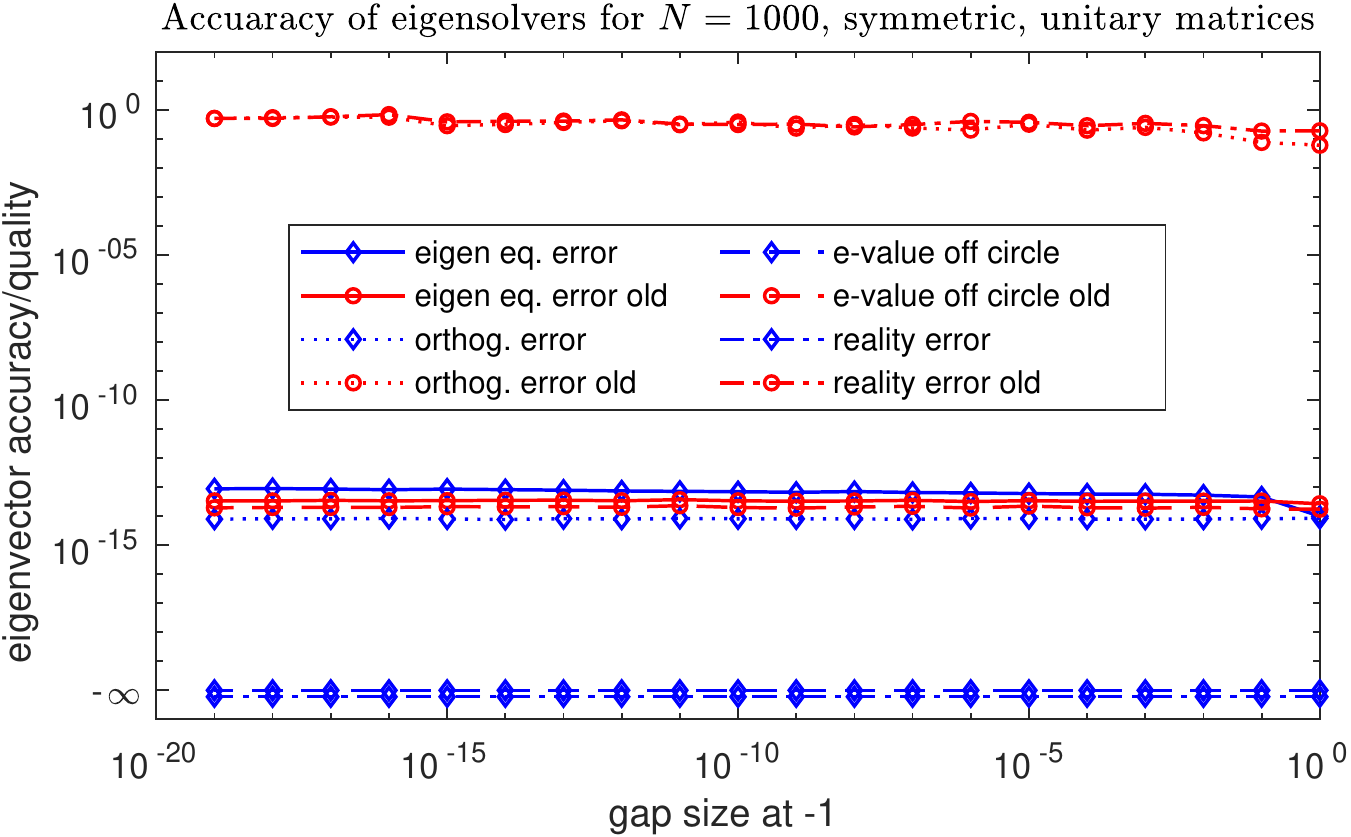}
\caption{As in Figure~\ref{fig:Quality_diag-I} except, this is for symmetric
complex unitary matrices, with $N=1000$, and there are the additional
plots, dash-dot, for the distance of the basis from being real. 
\label{fig:Quality_diag-II}}
\end{figure}

\begin{figure}
\includegraphics[clip]{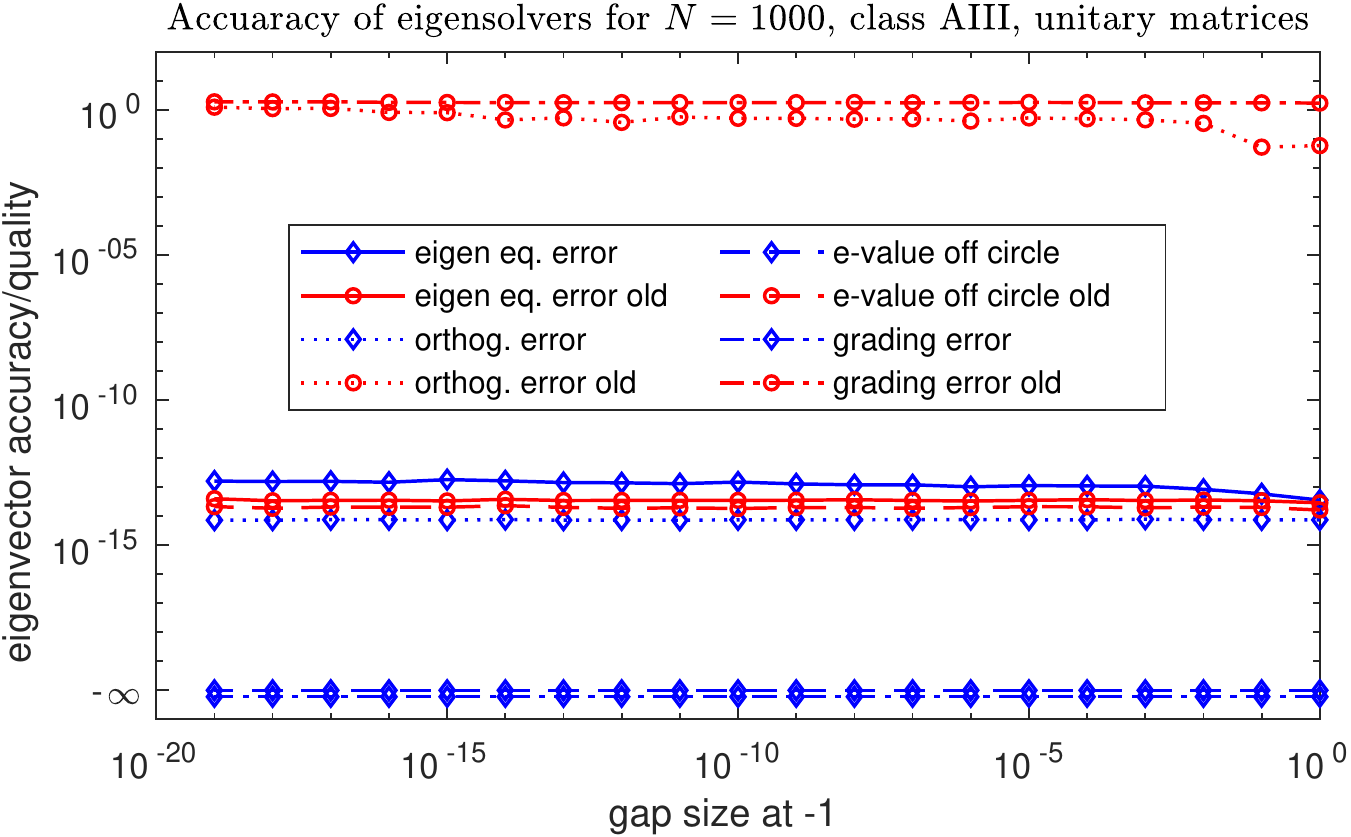}
\caption{As in Figure~\ref{fig:Quality_diag-I} except, this is for class
AIII complex unitary matrices, with $N=1000$, and there are the additional
plots, dash-dot, for the distance of the basis from being in time-reversal
pairs. 
\label{fig:Quality_diag-III}}
\end{figure}

Finally, we document the time needed for the structured unitary eigensolver.
As expected, it behaves as roughly order $N^3$ in time and seems to be
slower than the standard algorithm by a constant factor.  We show results
only in the complex case, in Figure~\ref{fig:Time_diag-I},  but the other cases
look similar. 

\section*{Acknowledgements}

This material is based upon work supported by the National Science Foundation under Grant No. DMS 1700102.

\begin{figure}
\includegraphics[clip]{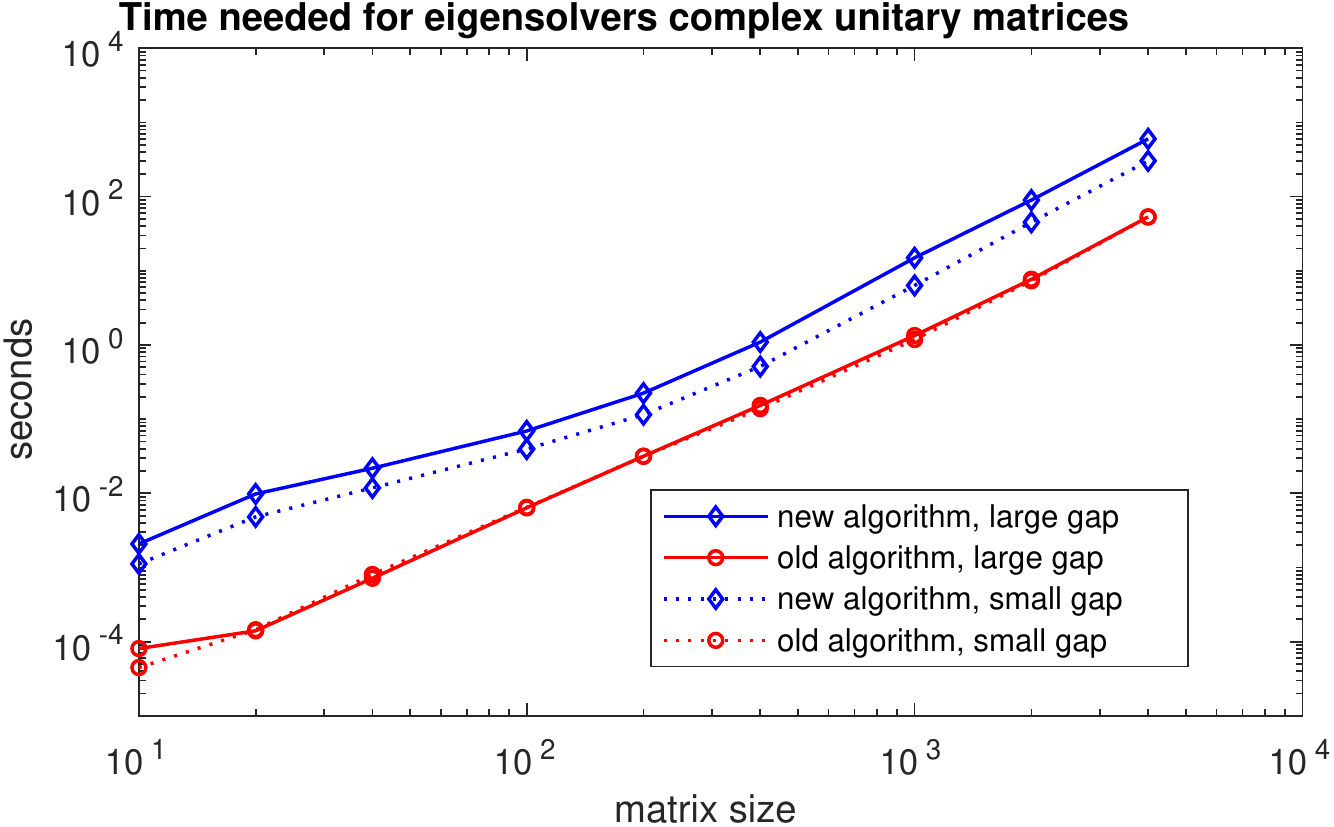}
\caption{Time in seconds for the new eigensolver for unitary matrices, contrasted
with the time needed for a generic, unstructured eigensolve. Here
for complex unitary matrices of various sizes and gaps at $-1$.
The large gaps are of size $10^{-2}$ and the small gaps of
size $10^{-15}$.
\label{fig:Time_diag-I}}
\end{figure}

\clearpage

\bibliographystyle{plain}
\bibliography{AIPFloquetHamiltoniansWithSymmetry}

\end{document}